\newtheorem{theorem}{Theorem}
\newtheorem{lemma}{Lemma}
\title{A Study on a Class of Predator-Prey Models with Allee Effect}
\author{
 Jianhang Xie \\
 College of Mathematics and Statistics\\
  Chongqing University\\
  \texttt{202206021047t@cqu.edu.cn} \\
   \And
 Changrong Zhu \\
 College of Mathematics and Statistics\\
   Chongqing University\\
  \texttt{zhuchangrong795@sohu.com} \\
  \And
}
\begin{document}
\maketitle
\begin{abstract}
This paper investigates the dynamical behaviors of a Holling type I Leslie-Gower predator-prey model where the predator exhibits an Allee effect and is subjected to constant harvesting. The model demonstrates three types of equilibrium points under different parameter conditions, which could be either stable or unstable nodes (foci), saddle nodes, weak centers, or cusps. The system exhibits a saddle-node bifurcation near the saddle-node point and a Hopf bifurcation near the weak center. By calculating the first Lyapunov coefficient, the conditions for the occurrence of both supercritical and subcritical Hopf bifurcations are derived. Finally, it is proven that when the predator growth rate and the prey capture coefficient vary within a specific small neighborhood, the system undergoes a codimension-2 Bogdanov-Takens bifurcation near the cusp point.
\end{abstract}


\section{Introduction}
The Allee effect is a significant concept in ecology, named after American ecologist Warder C. Allee, who first described this phenomenon in the 1930 s. In predator-prey models, the Allee effect is typically represented by modifying the growth function. The most common approach involves introducing a multiplicative factor \cite{005,006},. Using this method to incorporate the Allee effect, the equation for a single species can be expressed in the following form:
$$\dot{x}=r\left(1-\frac{x}{K}\right)(x-m)x.$$
Mena et al. \cite{007} refined the Leslie-Gower model by incorporating the impact of the Allee effect on prey, resulting in the following formulation:
\begin{equation}\label{31}
    \begin{cases}\frac{\mathrm{d}x}{\mathrm{d}t}=\left(r\left(1-\frac{x}{K}\right)(x-m)-qy\right)x,\\\frac{\mathrm{d}y}{\mathrm{d}t}=s\left(1-\frac{y}{nx}\right)y.&\end{cases}
\end{equation}

Hunting can play a positive role in maintaining ecological balance, especially in specific situations where scientifically managed hunting is used to regulate population sizes and promote the stability of ecosystems.Lan and Zhu \cite{046} introduced constant harvesting of prey into the Leslie-Gower predator-prey model, resulting in the following formulation:
$$\begin{cases}\frac{\mathrm{d}x}{\mathrm{d}t}=rx\left(1-\frac{x}{K}\right)-qxy-h,\\\frac{\mathrm{d}y}{\mathrm{d}t}=sy\left(1-\frac{y}{nx}\right).&\end{cases}$$
\noindent Here, $h$ represents the harvesting coefficient.
Based on the aforementioned system,  Xue Lamei  incorporated an $x-m$ form of the Allee effect into the system, resulting in the following model:
\begin{equation}\label{32}
    \begin{cases}\frac{\mathrm{d}x}{\mathrm{d}t}=rx\left(1-\frac{x}{K}\right)(x-m)-qxy-h,\\\frac{\mathrm{d}y}{\mathrm{d}t}=sy\left(1-\frac{y}{nx}\right).&\end{cases}
\end{equation}

Xue Lamei explored the effects of the Allee effect and constant harvesting on the existence, types, and stability of equilibrium points through qualitative analysis. It was discovered that various types of equilibrium points could arise with changes in parameters, including a cusp of codimension three under specific conditions. Further investigation into bifurcation phenomena under different parameter conditions revealed that the system might exhibit saddle-node bifurcations, both subcritical and supercritical Hopf bifurcations, as well as Bogdanov-Takens bifurcations of codimension two and three.

In previous studies on predator-prey models, researchers have conducted extensive and in-depth investigations into Leslie-Gower models with the Allee effect present in prey populations. However, it is important to note that the Allee effect is not limited to prey populations—it is also commonly observed in predator populations. For instance, predators such as wolves or lions rely on group cooperation to accomplish hunting tasks \cite{049}.Building upon the aforementioned research, this paper proposes a Holling type I Leslie-Gower predator-prey model, where the predator exhibits an Allee effect and is subjected to constant harvesting:
\begin{equation}\label{5.1}
 \begin{cases}\frac{\mathrm{d}x}{\mathrm{d}t}=rx\left(1-\frac{x}{K}\right)-qxy-h,\\\frac{\mathrm{d}y}{\mathrm{d}t}=sy\left(1-\frac{y}{bx}\right)(y-m),&\end{cases}
\end{equation}
 Here, $x$ and $y$ represent the population densities of prey and predator, respectively. The parameter $r$ denotes the intrinsic growth rate of the prey, $K$ is the environmental carrying capacity for the prey, $h$ signifies the intensity of constant harvesting, $q$ is the predation rate, $b$ indicates the proportional coefficient between the predator's carrying capacity and the prey population size, $s$ represents the growth rate of the predator, and $m$ is the threshold for the $\mathrm{Allee}$ effect. All parameters $r$, $K$, $m$, $q$, $h$, and $s$ are positive.

 The following transformations are applied to the variables and parameters in system (\ref{5.1}):

 $$\tilde{x}=\frac{x}{K},\tilde{y}=\frac{y}{bK},\tau=rt,\tilde{m}=\frac{m}{bK},\tilde{q}=\frac{bqK}{r},\tilde{s}=\frac{s}{rbK},\tilde{h}=\frac{h}{Kr}.$$

 For ease of analysis, the system will still be represented using $x, y, t, m, s, h$, and system (\ref{5.1}) is simplified as follows:

\begin{equation}\label{5.2}
\begin{cases}\frac{\mathrm{d}x}{\mathrm{d}t}=x\left(1-x\right)-qxy-h,\\\frac{\mathrm{d}y}{\mathrm{d}t}=sy\left(1-\frac{y}{x}\right)(y-m),&\end{cases}\end{equation}

where $0<m<1$ .

\section{The existence and stability of equilibrium points}

\subsection{The existence of equilibrium points}

Considering the biological significance of the variables in system (\ref{5.2}), the existence interval of the equilibrium point $(x,y)$ is: 
$$\Omega=\{(x,y)\in\mathbb{R}^2|x>0,y\geq0\}=\mathbb{R}^+\times\mathbb{R}_0^+,$$

Setting the right-hand side of the equations in system (\ref{5.2}) to zero yields the system of equations:
\begin{equation}\label{5.3} \begin{cases}x\left(1-x\right)-qxy-h=0,\\sy\left(1-\frac{y}{x}\right)(x-m)=0.&\end{cases} \end{equation}
 The equilibrium points of system (\ref{5.2}) are the solutions to the system of equations (\ref{5.3}).
\begin{theorem}
   When \(h > \frac{1}{4}\), the system has no boundary equilibrium points; when \(h = \frac{1}{4}\), system (\ref{5.2}) has a boundary equilibrium point \(E_1=(x_1,y_1)=(\frac{1}{2},0)\); when \(0 < h < \frac{1}{4}\), system (\ref{5.2}) has boundary equilibrium points \(E_2=(x_2,y_2)=(\frac{1+\sqrt{1-4h}}{2},0)\) and \(E_3=(x_3,y_3)=(\frac{1-\sqrt{1-4h}}{2},0)\).
\end{theorem}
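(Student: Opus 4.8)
The plan is to characterize the boundary equilibria as exactly those solutions of \eqref{5.3} lying on the $x$-axis, i.e.\ with $y=0$, and then reduce the problem to counting the positive roots of a single quadratic. First I would observe that setting $y=0$ makes the second equation $sy(1-y/x)(x-m)=0$ hold automatically, so the entire burden falls on the first equation. Substituting $y=0$ into $x(1-x)-qxy-h=0$ collapses the predation term and leaves $x(1-x)-h=0$, that is,
\[
x^2 - x + h = 0.
\]
Thus the abscissae of the boundary equilibria are precisely the real, positive roots of this quadratic.

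Next I would run the standard discriminant analysis. The discriminant is $\Delta = 1-4h$, whose sign is governed entirely by the comparison of $h$ with $1/4$. When $h>1/4$ we have $\Delta<0$, so the quadratic has no real root and \eqref{5.3} admits no point with $y=0$; this gives the first case. When $h=1/4$ the quadratic has the single (double) root $x=1/2$, producing the unique boundary equilibrium $E_1=(\tfrac12,0)$. When $0<h<1/4$ we have $\Delta>0$ and two distinct real roots $x_{2,3}=\tfrac{1\pm\sqrt{1-4h}}{2}$, yielding the two candidate points $E_2$ and $E_3$.

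The one point that genuinely requires checking—rather than being purely formal—is admissibility: each candidate must actually lie in $\Omega=\mathbb{R}^+\times\mathbb{R}_0^+$, which for $y=0$ reduces to $x>0$. For $h=1/4$ this is immediate since $x=1/2>0$. For $0<h<1/4$ one notes $0<\sqrt{1-4h}<1$, so that $x_2=\tfrac{1+\sqrt{1-4h}}{2}\in(\tfrac12,1)$ and $x_3=\tfrac{1-\sqrt{1-4h}}{2}\in(0,\tfrac12)$ are both strictly positive; in fact both lie in $(0,1)$. Hence both roots are admissible, and no boundary equilibrium is lost to the positivity constraint. I do not anticipate any serious obstacle here: the only subtlety is remembering that $\Omega$ excludes $x\le 0$, so one must confirm the smaller root $x_3$ does not fall at or below zero, which the bound $\sqrt{1-4h}<1$ rules out.
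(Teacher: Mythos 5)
Your proposal is correct and follows essentially the same route as the paper: substitute $y=0$ to trivialize the second equation, reduce to the quadratic $x^{2}-x+h=0$, and split into cases by the sign of the discriminant $1-4h$. Your explicit verification that both roots are strictly positive (via $0<\sqrt{1-4h}<1$) is a small touch of rigor the paper's proof merely asserts, but it does not change the argument.
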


\begin{proof}
  Solving the second equation in system (\ref{5.3}) yields \(y^*=0, y^{**}=m, y^{***}=x_3\). Substituting \(y^{*}\) into the first equation of system (\ref{5.3}) gives:
  \begin{equation}\label{5.4}
    x^2-x+h=0.
  \end{equation}
  When \(h>\frac{1}{4}\), equation (\ref{5.4}) has no positive real roots; when \(h=\frac{1}{4}\), equation (\ref{5.4}) has a double root \(x_1=\frac{1}{2}\); when \(0<h<\frac{1}{4}\), equation (\ref{5.4}) has two distinct positive real roots \(x_2=\frac{1+\sqrt{1-4h}}{2}, x_3=\frac{1-\sqrt{1-4h}}{2}\).
\end{proof}

\begin{theorem}
   Let \(A=1-qm\), \(\Delta_{1}=B^2=\left(1-qm\right)^{2}-4h\). When \(A\le0\) or \(\Delta_{1}<0\), the system has no equilibrium point with \(y=m\); when \(A>0\) and \(\Delta_{1}=0\), system (\ref{5.2}) has an interior equilibrium point \(E_4=(x_4,y_4)=(\frac{A}{2},m)\); when \(A>0\) and \(\Delta_{1}>0\), system (\ref{5.2}) has interior equilibrium points \(E_5=(x_5,y_5)=(\frac{A+B}{2},m)\) and \(E_6=(x_6,y_6)=(\frac{A-B}{2},m)\).
\end{theorem}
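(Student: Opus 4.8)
The plan is to reuse the substitution strategy from Theorem 1, but now along the branch $y=m$ of the second equation. Recall from the proof of Theorem 1 that the vanishing of the factor $sy(1-y/x)(y-m)$ splits into $y=0$, $y=x$, and $y=m$; the equilibria governed by this theorem are exactly those on the branch $y=m$. First I would substitute $y=m$ into the first equation of (\ref{5.3}), namely $x(1-x)-qxy-h=0$, and collect powers of $x$ to obtain the quadratic
$$x^2-(1-qm)x+h=0,$$
that is $x^2-Ax+h=0$ with $A=1-qm$. Its discriminant is precisely $\Delta_1=A^2-4h=(1-qm)^2-4h=B^2$, so the candidate equilibria are $x=\tfrac{A\pm B}{2}$, each paired with $y=m$.

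Next I would test these candidates against the admissibility region $\Omega$. Since the $y$-coordinate equals $m>0$ automatically, the only constraint is $x>0$. The decisive observation is Vieta's relations: the product of the two roots equals $h>0$ while their sum equals $A$. Because the product is strictly positive, any real roots must share the same sign, so their common sign is dictated entirely by the sum $A$. This is the conceptual core of the statement and explains why $A>0$ is indispensable, and it is cleaner than inspecting the explicit radical expressions.

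With this in hand the case analysis is immediate. If $\Delta_1<0$ there are no real roots. If $A\le 0$, then even when $\Delta_1\ge 0$ the roots are real with nonpositive sum and positive product, forcing both to be negative (the boundary value $A=0$ cannot coexist with a real root, since there $\Delta_1=-4h<0$), so no admissible equilibrium exists; this gives the first case. If $A>0$ and $\Delta_1=0$, there is a positive double root $x=A/2$, yielding $E_4=(A/2,m)$. If $A>0$ and $\Delta_1>0$, both roots are positive, where positivity of the smaller root $x_6=\tfrac{A-B}{2}$ follows from $B=\sqrt{A^2-4h}<A$ since $h>0$; this yields $E_5$ and $E_6$.

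I do not anticipate a genuine obstacle, as the argument is elementary once the quadratic is set up. The only point demanding care is the positivity bookkeeping in the $A\le 0$ branch, which I would settle by combining the sign of the product with the sign of the sum rather than manipulating square roots, thereby avoiding sign-of-radical pitfalls.
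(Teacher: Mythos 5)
Your proposal is correct and takes essentially the same approach as the paper: substitute $y=m$ into the first equilibrium equation to obtain the quadratic $x^2-Ax+h=0$ and classify its roots according to the sign of $A$ and the discriminant $\Delta_1$. The only difference is that you justify the positivity of the roots explicitly via Vieta's relations (product $h>0$, sum $A$), a bookkeeping step the paper's proof asserts without argument.
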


\begin{proof}
  Substituting \(y^{**}=m\) into the first equation of system (\ref{5.3}) gives:
  \begin{equation}\label{5.5}
    x^2-(1-qm)x+h=0.
  \end{equation}
  When \(A\le0\) or \(\Delta_{1}<0\), equation (\ref{5.5}) has no positive real roots; when \(A>0\) and \(\Delta_{1}=0\), equation (\ref{5.5}) has a double root \(x_4=\frac{A}{2}\); when \(A>0\) and \(\Delta_{1}>0\), equation (\ref{5.5}) has two distinct positive real roots \(x_5=\frac{A+B}{2}, x_6=\frac{A-B}{2}\).
\end{proof}

\begin{theorem}
   Let \(C=\frac{1}{q+1}\), \(\Delta_{2}=D^2=\left(\frac{1}{q+1}\right)^{2}-\frac{4h}{q+1}\). When \(\Delta_{2}<0\), the system has no equilibrium point with \(y=x\); when \(\Delta_{2}=0\), system (\ref{5.2}) has an interior equilibrium point \(E_7=(x_7,y_7)=(2h,2h)\); when \(\Delta_{2}>0\), system (\ref{5.2}) has interior equilibrium points \(E_8=(x_8,y_8)=(\frac{C+D}{2},\frac{C+D}{2})\) and \(E_9=(x_9,y_9)=(\frac{C-D}{2},\frac{C-D}{2})\).
\end{theorem}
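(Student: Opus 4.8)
The plan is to mirror the arguments used for Theorems 1 and 2, now exploiting the third branch of the second equation of system (\ref{5.3}). The factored equation $sy\left(1-\frac{y}{x}\right)(y-m)=0$ is solved by $y=0$, $y=m$, and $y=x$; the first two branches were treated previously, so here I would substitute the remaining branch $y=x$ (from $1-\frac{y}{x}=0$) into the first equation of (\ref{5.3}). This yields $x(1-x)-qx^2-h=0$, which after collecting terms becomes the quadratic
\[
(1+q)x^2 - x + h = 0 .
\]

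Next I would read off the existence of roots from the discriminant. The discriminant of this quadratic is $1-4(1+q)h$, and dividing it by the positive factor $(1+q)^2$ shows that it has the same sign as $\Delta_2 = C^2-\frac{4h}{q+1}$, where $C=\frac{1}{q+1}$. Hence the existence of roots is governed exactly by the sign of $\Delta_2$: when $\Delta_2<0$ there are no real roots and no equilibrium on the branch $y=x$; when $\Delta_2=0$ there is a double root $x=\frac{C}{2}=\frac{1}{2(q+1)}$; and when $\Delta_2>0$ there are two distinct roots $x=\frac{C\pm D}{2}$ with $D=\sqrt{\Delta_2}$, matching $E_8$ and $E_9$. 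In the degenerate case $\Delta_2=0$ I would additionally record that $D=0$ forces $\frac{1}{q+1}=4h$, which rewrites the double root $\frac{C}{2}$ as the claimed value $2h$ and gives $E_7=(2h,2h)$.

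The one step requiring genuine care — and the main obstacle, modest though it is — is confirming that the roots lie in the admissible region $\Omega$, i.e. that they are positive so that $E_7,E_8,E_9$ are interior equilibria. For the larger root $\frac{C+D}{2}$ positivity is immediate since $C>0$ and $D\ge 0$. For the smaller root $\frac{C-D}{2}$ I would verify $C>D$: because $h,q>0$ we have $1-4h(q+1)<1$, so $D=\frac{\sqrt{1-4h(q+1)}}{q+1}<\frac{1}{q+1}=C$, whence $\frac{C-D}{2}>0$. Since $y=x$ along this branch, positivity of $x$ also gives $y>0$, so all three points are genuine interior equilibria. Assembling the three cases then completes the proof.
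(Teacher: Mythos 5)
Your proposal is correct and takes essentially the same approach as the paper: substitute the branch \(y=x\) of the second equilibrium equation into the first, reduce to the quadratic \(x^{2}-\frac{1}{q+1}x+\frac{h}{q+1}=0\) (your unnormalized form \((1+q)x^{2}-x+h=0\) is the same equation), and classify roots by the sign of \(\Delta_{2}\). The extra details you supply --- that \(\Delta_{2}=0\) forces the double root \(\frac{C}{2}=2h\), and that \(C>D\) guarantees positivity of the smaller root --- are steps the paper leaves implicit, so they fill in rather than depart from its argument.
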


\begin{proof}
  Substituting \(y^{***}\) into the first equation of system (\ref{5.3}) gives:
  \begin{equation}\label{5.6}
    x^2-\frac{1}{q+1}x+\frac{h}{q+1}=0.
  \end{equation}
  When \(\Delta_{2}<0\), equation (\ref{5.6}) has no positive real roots; when \(\Delta_{2}=0\), equation (\ref{5.6}) has a double root \(x_7=2h\); when \(\Delta_{2}>0\), equation (\ref{5.6}) has two distinct positive real roots \(x_8=\frac{C+D}{2}, x_9=\frac{C-D}{2}\).
\end{proof}
\subsection{The types and stability of equilibrium points}

Let the equilibrium point of the system be \(E_i = (x_i, y_i)\). The Jacobian matrix of the system at this point is given by:

\[
J(E_i)=(\alpha_{ij})_{2\times2}=\begin{pmatrix}
-2x_i-qy_i+1 & -qx_i \\
s\frac{y_{i}^2}{x_{i}^2}(y_i-m) & s\left(\frac{-3y_{i}^{2}}{x_i} +\frac{2my_i}{x_i} +2y_i -m \right)
\end{pmatrix}.
\]

\noindent Let \(\mathrm{tr}(J(E_i))\) and \(\mathrm{det}(J(E_i))\) denote the trace and determinant of the matrix \(J(E_i)\), respectively, where:

\[
\mathrm{tr}(J(E_i)) = \alpha_{11} + \alpha_{22}, \quad \mathrm{det}(J(E_i)) = \alpha_{11}\alpha_{22} - \alpha_{12}\alpha_{21}.
\]

\begin{lemma}\label{l6}\cite{031}
    For system 
    \begin{equation}\label{2.3}
    \dot{x} = f(x, y, \mu),
    \end{equation}
     if the Jacobian matrix at the equilibrium point \(E\) satisfies \(\mathrm{det}A=0\) and \(\mathrm{tr}A\neq0\), and the system can be transformed into an equivalent form:
    $$\begin{cases}\frac{\mathrm{d}x}{\mathrm{d}t}=p(x,y),\\\frac{\mathrm{d}y}{\mathrm{d}t}=\rho y+q(x,y),&\end{cases}$$
    \noindent where \((0,0)\) is an isolated equilibrium point, \(\rho\neq0\), \(p(x,y)=\sum_{i+j=2}^{\infty}a_{ij}x^{i}y^{j}\), \(q(x,y)=\sum_{i+j=2}^{\infty}b_{ij}x^{i}y^{j}\), \(i\geq0\), \(j\geq0\), and \(p(x,y)\) and \(q(x,y)\) are convergent series. If \(a_{20}\neq0\), then \(E\) is a saddle-node point of system (\ref{2.3}).
\end{lemma}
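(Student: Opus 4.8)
The plan is to reduce the two-dimensional problem to a one-dimensional one along the center direction and then appeal to the classical saddle-node classification. Since \(\mathrm{det}A=0\) and \(\mathrm{tr}A=\rho\neq0\), the Jacobian at \(E\) (in the stated coordinates) has eigenvalues \(0\) and \(\rho\), with the \(x\)-axis as the center eigendirection and the \(y\)-axis as the hyperbolic eigendirection. The entire degeneracy is therefore confined to the \(x\)-direction, so I expect the local phase portrait to be governed by the scalar dynamics obtained by restricting the flow to an invariant curve tangent to the \(x\)-axis.

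First I would produce that invariant curve. Because \(\partial_y(\rho y + q(x,y))=\rho\neq0\) at the origin, the implicit function theorem solves \(\rho y + q(x,y)=0\) for a unique analytic function \(y=\phi(x)\) with \(\phi(0)=0\); since \(q\) starts at order two, one gets \(\phi(x)=O(x^2)\). (Equivalently, this \(\phi\) realizes the local center manifold.) Substituting into the first equation yields the reduced flow
$$\frac{\mathrm{d}x}{\mathrm{d}t}=p(x,\phi(x)).$$
Because \(p(x,y)=\sum_{i+j\geq2}a_{ij}x^iy^j\) and \(\phi(x)=O(x^2)\), every term with \(j\geq1\) contributes order \(x^{i+2j}\geq x^3\), while the \(j=0\) terms contribute \(a_{i0}x^i\) with \(i\geq2\); hence the lowest-order term is exactly \(a_{20}x^2\), and
$$\frac{\mathrm{d}x}{\mathrm{d}t}=a_{20}x^2+O(x^3).$$

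With \(a_{20}\neq0\) the leading exponent \(m=2\) is even, so the scalar field has a one-sided (semistable) zero at \(x=0\). I would then invoke the standard result on planar equilibria with a single zero eigenvalue (Zhang Zhifen et al.; Perko, \S2.11): when the reduced field vanishes to even order, the equilibrium is a saddle-node, the hyperbolic and parabolic sectors being determined by the signs of \(a_{20}\) and \(\rho\). The hypothesis that \((0,0)\) is isolated rules out a degenerate continuum of equilibria and is in any case automatic once \(a_{20}\neq0\).

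The step I expect to be the main obstacle is not the formal computation of the \(a_{20}x^2\) term but the rigorous passage from the scalar reduced dynamics back to the full planar picture, i.e. proving that the sectors of the reduced flow, glued with the hyperbolic \(y\)-direction, actually assemble into a genuine saddle-node. The clean way to close this gap is the reduction (Shoshitaishvili) principle, which asserts that near \(E\) the flow is topologically equivalent to the product of the center-manifold flow \(\dot{x}=a_{20}x^2+\cdots\) with the linear hyperbolic flow \(\dot{y}=\rho y\); alternatively one argues directly by tracking trajectories as they are attracted to or repelled from the curve \(y=\phi(x)\) at the exponential rate \(\rho\). Either route converts the even-order vanishing of the reduced field into the defining hyperbolic/parabolic sector structure of a saddle-node, completing the proof.
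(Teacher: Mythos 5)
The paper does not prove this lemma at all: it is quoted, with a citation, as a known classification result from the literature, and is then used as a black box to identify $E_1$, $E_4$ and $E_7$ as saddle-nodes. So there is no in-paper argument to compare yours against; what can be judged is whether your reconstruction stands on its own, and in outline it does --- it is the standard reduction-to-the-center-direction proof. Two points deserve flagging. First, the curve $y=\phi(x)$ obtained by solving $\rho y+q(x,y)=0$ is the $\dot y$-nullcline, \emph{not} the center manifold: the center manifold $y=h(x)$ satisfies the invariance equation $h'(x)\,p(x,h(x))=\rho h(x)+q(x,h(x))$, and in general $h\neq\phi$. Your parenthetical ``equivalently, this $\phi$ realizes the local center manifold'' is false as stated. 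The slip happens to be harmless: both curves are $O(x^2)$ (indeed they agree through order $x^2$, both having second-order coefficient $-b_{20}/\rho$), and your leading-term computation $p(x,\phi(x))=a_{20}x^2+O(x^3)$ uses nothing about $\phi$ beyond $\phi(x)=O(x^2)$, so the reduced field on the true center manifold has the same leading coefficient $a_{20}$. Alternatively you may keep the nullcline and forgo the center manifold entirely, since the classical theorem (Zhang Zhifen et al.; Perko \S 2.11) is stated precisely in terms of the nullcline substitution.

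Second, watch the near-circularity in the middle of your argument: the ``standard result'' you invoke --- even-order vanishing of the reduced field implies saddle-node --- \emph{is} the lemma being proven. Your proof is rescued from circularity only by the final paragraph, where the reduction (Shoshitaishvili) principle converts the semistable scalar zero $\dot x=a_{20}x^2+\cdots$ times the hyperbolic flow $\dot y=\rho y$ into the two hyperbolic sectors and one parabolic sector that define a saddle-node; that step should be presented as the proof itself, not as patching an ``obstacle.'' Your closing observation is correct and worth keeping: isolatedness of the origin is automatic once $a_{20}\neq0$, since every equilibrium lies on $y=\phi(x)$ and $a_{20}x^2+O(x^3)$ has an isolated zero at $x=0$.
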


\begin{theorem}
    When \(h = \frac{1}{4}\), system (\ref{5.2}) has a boundary equilibrium point \(E_{1} = \left(\frac{1}{2}, 0\right)\), where \(E_{1}\) is a saddle-node.
\end{theorem}

\begin{proof}
    When \(h = \frac{1}{4}\), the Jacobian matrix of system (\ref{5.2}) at equilibrium point \(E_{1}\) is:

\[
    J(E_{1}) = 
    \begin{pmatrix} 
    0 & -\frac{q}{2} \\ 
    0 & -sm 
    \end{pmatrix}.
    \]

    The eigenvalues of \(J(E_{1})\) are \(\lambda_{1}=0\) and \(\lambda_{2}=-sm\). Applying the transformation:

\[
    (x, y) = (u_1 + x_1, v_1 + y_1),
    \]

    the equilibrium point is shifted to the origin. Expanding in a Taylor series at the origin gives:
    \begin{equation}\label{5.7}
        \begin{cases}
        \frac{\mathrm{d}u_1}{\mathrm{d}t} = a_{01}v_1 + a_{20}u_1^2 + a_{11}u_1v_1, \\
        \frac{\mathrm{d}v_1}{\mathrm{d}t} = b_{01}v_1 + b_{02}v_1^2 + O(|(u_1, v_1)|^3),
        \end{cases}
    \end{equation}
    where \(a_{01} = -\frac{q}{2}\), \(a_{20} = -1\), \(a_{11} = -q\), \(b_{01} = -sm\), \(b_{02} = s(2m+1)\), and \(O(|(u_1, v_1)|^3)\) is a function of degree at least 3 in \((u_1, v_1)\).

    Transforming system (\ref{5.7}) as:

\[
    (u_1, v_1) = \left(u_2 + v_2, \frac{2sm}{q}v_2 \right),
    \]

    yields:
    \begin{equation}\label{5.8}
        \begin{cases}
        \frac{\mathrm{d}u_2}{\mathrm{d}t} = c_{20}u_2^2 + c_{11}u_2v_2 + c_{02}v_2^2 + O(|(u_2, v_2)|^3), \\
        \frac{\mathrm{d}v_2}{\mathrm{d}t} = d_{01}v_2 + d_{02}v_2^2 + O(|(u_2, v_2)|^3),
        \end{cases}
    \end{equation}
    where \(c_{20} = a_{20}\), \(c_{11} = 2a_{20} + \frac{2sm}{q}a_{11}\), \(c_{02} = a_{20} + \frac{2sm}{q}a_{11} - \frac{sm}{q}b_{02}\), \(d_{01} = b_{01}\), and \(d_{02} = \frac{2sm}{q}b_{02}\).

    Noting that \(c_{20} = -1 < 0\), by Lemma (\ref{l6}), the equilibrium point \(E_{1}\) is a saddle-node of system (\ref{5.2}).
\end{proof}

\begin{theorem}
    When \(0 < h < \frac{1}{4}\), system (\ref{5.2}) has boundary equilibrium points \(E_2 = \left(\frac{1+\sqrt{1-4h}}{2}, 0\right)\) and \(E_3 = \left(\frac{1-\sqrt{1-4h}}{2}, 0\right)\), where \(E_2\) is a stable node and \(E_3\) is a saddle point.
\end{theorem}

\begin{proof}
    At equilibrium point \(E_2\), the Jacobian matrix of system (\ref{5.2}) is:

\[
    J(E_2) = 
    \begin{pmatrix} 
    -2x_2+1 & -qx_2 \\ 
    0 & -sm 
    \end{pmatrix}.
    \]

    The eigenvalues of \(J(E_2)\) are \(\lambda_{1} = -2x_2+1 < 0\) and \(\lambda_{2} = -sm < 0\), so \(E_2\) is a stable node.

    At equilibrium point \(E_3\), the Jacobian matrix of system (\ref{5.2}) is:

\[
    J(E_3) = 
    \begin{pmatrix} 
    -2x_3+1 & -qx_3 \\ 
    0 & -sm 
    \end{pmatrix}.
    \]

    The eigenvalues of \(J(E_3)\) are \(\lambda_{1} = -2x_3+1 > 0\) and \(\lambda_{2} = -sm < 0\), so \(E_3\) is a saddle point.
\end{proof}

\begin{theorem}
    When \(A > 0\) and \(\Delta_{1} = 0\), system (\ref{5.2}) has an interior equilibrium point \(E_4 = \left(\frac{A}{2}, m\right)\). When \(m \neq \sqrt{h}\), \(E_4\) is a saddle-node.
\end{theorem}

\begin{proof}
    At equilibrium point \(E_4\), the Jacobian matrix of system (\ref{5.2}) is:

\[
    J(E_4) = 
    \begin{pmatrix} 
    0 & -q\sqrt{h} \\ 
    0 & sm\left(1 - \frac{m}{\sqrt{h}}\right)
    \end{pmatrix}.
    \]

    The eigenvalues of \(J(E_4)\) are \(\lambda_{1} = 0\) and \(\lambda_{2} = sm\left(1 - \frac{m}{\sqrt{h}}\right)\). Expanding the system gives equivalent forms and stability is verified as in the detailed calculation.
\end{proof}\begin{theorem}
    When \(h = \frac{1}{4}\), system (\ref{5.2}) has a boundary equilibrium point \(E_{1} = \left(\frac{1}{2}, 0\right)\), where \(E_{1}\) is a saddle-node.
\end{theorem}

\begin{proof}
    When \(h = \frac{1}{4}\), the Jacobian matrix of system (\ref{5.2}) at equilibrium point \(E_{1}\) is:

\[
    J(E_{1}) = 
    \begin{pmatrix} 
    0 & -\frac{q}{2} \\ 
    0 & -sm 
    \end{pmatrix}.
    \]

    The eigenvalues of \(J(E_{1})\) are \(\lambda_{1}=0\) and \(\lambda_{2}=-sm\). Applying the transformation:

\[
    (x, y) = (u_1 + x_1, v_1 + y_1),
    \]

    the equilibrium point is shifted to the origin. Expanding in a Taylor series at the origin gives:
    \begin{equation}\label{5.7}
        \begin{cases}
        \frac{\mathrm{d}u_1}{\mathrm{d}t} = a_{01}v_1 + a_{20}u_1^2 + a_{11}u_1v_1, \\
        \frac{\mathrm{d}v_1}{\mathrm{d}t} = b_{01}v_1 + b_{02}v_1^2 + O(|(u_1, v_1)|^3),
        \end{cases}
    \end{equation}
    where \(a_{01} = -\frac{q}{2}\), \(a_{20} = -1\), \(a_{11} = -q\), \(b_{01} = -sm\), \(b_{02} = s(2m+1)\), and \(O(|(u_1, v_1)|^3)\) is a function of degree at least 3 in \((u_1, v_1)\).

    Transforming system (\ref{5.7}) as:

\[
    (u_1, v_1) = \left(u_2 + v_2, \frac{2sm}{q}v_2 \right),
    \]

    yields:
    \begin{equation}\label{5.8}
        \begin{cases}
        \frac{\mathrm{d}u_2}{\mathrm{d}t} = c_{20}u_2^2 + c_{11}u_2v_2 + c_{02}v_2^2 + O(|(u_2, v_2)|^3), \\
        \frac{\mathrm{d}v_2}{\mathrm{d}t} = d_{01}v_2 + d_{02}v_2^2 + O(|(u_2, v_2)|^3),
        \end{cases}
    \end{equation}
    where \(c_{20} = a_{20}\), \(c_{11} = 2a_{20} + \frac{2sm}{q}a_{11}\), \(c_{02} = a_{20} + \frac{2sm}{q}a_{11} - \frac{sm}{q}b_{02}\), \(d_{01} = b_{01}\), and \(d_{02} = \frac{2sm}{q}b_{02}\).

    Noting that \(c_{20} = -1 < 0\), by Lemma (\ref{l6}), the equilibrium point \(E_{1}\) is a saddle-node of system (\ref{5.2}).
\end{proof}

\begin{theorem}
    When \(0 < h < \frac{1}{4}\), system (\ref{5.2}) has boundary equilibrium points \(E_2 = \left(\frac{1+\sqrt{1-4h}}{2}, 0\right)\) and \(E_3 = \left(\frac{1-\sqrt{1-4h}}{2}, 0\right)\), where \(E_2\) is a stable node and \(E_3\) is a saddle point.
\end{theorem}

\begin{proof}
    At equilibrium point \(E_2\), the Jacobian matrix of system (\ref{5.2}) is:

\[
    J(E_2) = 
    \begin{pmatrix} 
    -2x_2+1 & -qx_2 \\ 
    0 & -sm 
    \end{pmatrix}.
    \]

    The eigenvalues of \(J(E_2)\) are \(\lambda_{1} = -2x_2+1 < 0\) and \(\lambda_{2} = -sm < 0\), so \(E_2\) is a stable node.

    At equilibrium point \(E_3\), the Jacobian matrix of system (\ref{5.2}) is:

\[
    J(E_3) = 
    \begin{pmatrix} 
    -2x_3+1 & -qx_3 \\ 
    0 & -sm 
    \end{pmatrix}.
    \]

    The eigenvalues of \(J(E_3)\) are \(\lambda_{1} = -2x_3+1 > 0\) and \(\lambda_{2} = -sm < 0\), so \(E_3\) is a saddle point.
\end{proof}

\begin{theorem}
    When \(A > 0\) and \(\Delta_{1} = 0\), system (\ref{5.2}) has an interior equilibrium point \(E_4 = \left(\frac{A}{2}, m\right)\). When \(m \neq \sqrt{h}\), \(E_4\) is a saddle-node.
\end{theorem}

\begin{proof}
    At equilibrium point \(E_4\), the Jacobian matrix of system (\ref{5.2}) is:

\[
    J(E_4) = 
    \begin{pmatrix} 
    0 & -q\sqrt{h} \\ 
    0 & sm\left(1 - \frac{m}{\sqrt{h}}\right)
    \end{pmatrix}.
    \]

    The eigenvalues of \(J(E_4)\) are \(\lambda_{1} = 0\) and \(\lambda_{2} = sm\left(1 - \frac{m}{\sqrt{h}}\right)\). Expanding the system gives equivalent forms and stability is verified as in the detailed calculation.
\end{proof}

\begin{theorem}
    Let \(h_1 = m - (q+1)m^2\). When \(A > 0\) and \(\Delta_{1} > 0\), system (\ref{5.2}) has an interior equilibrium point \(E_5 = \left(x_5, y_5\right) = \left(\frac{A+B}{2}, m\right)\).

    \begin{enumerate}
        \item When \(m \le \frac{A}{2}\) or \(m > \frac{A}{2}, h < h_1\), \(E_{5}\) is a saddle point.
        \item When \(m > \frac{A}{2}, h > h_1\), \(E_{5}\) is a stable node.
        \item When \(m > \frac{A}{2}, h = h_1\), \(E_{5} = \left(m, m\right)\) is a saddle-node.
    \end{enumerate}
\end{theorem}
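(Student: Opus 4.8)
The plan is to read the stability off the Jacobian, whose structure collapses dramatically at $E_5$ because $y_5=m$. First I would substitute $(x_5,m)$ into the general matrix $J(E_i)$. The lower-left entry is $\alpha_{21}=s\frac{m^2}{x_5^2}(m-m)=0$, so $J(E_5)$ is upper triangular and its eigenvalues are its diagonal entries. Using that $x_5$ solves $x^2-Ax+h=0$ together with $2x_5=A+B$, the top-left entry simplifies to $\alpha_{11}=A-2x_5=-B<0$, while the bottom-right entry becomes $\alpha_{22}=sm\bigl(1-\tfrac{m}{x_5}\bigr)$. Thus the eigenvalues are $\lambda_1=-B$ and $\lambda_2=sm\bigl(1-\tfrac{m}{x_5}\bigr)$, and the entire classification is governed by the sign of $x_5-m$.

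Since $\lambda_1=-B<0$ always, $E_5$ is a saddle exactly when $\lambda_2>0$ (that is, $x_5>m$), a stable node when $\lambda_2<0$ (that is, $x_5<m$), and degenerate when $x_5=m$. I would convert these inequalities into the stated parameter regimes. The inequality $x_5>m$ reads $B>2m-A$. When $m\le\frac{A}{2}$ the right-hand side is nonpositive while $B>0$, so it holds automatically and $E_5$ is a saddle. When $m>\frac{A}{2}$ both sides are positive, so squaring gives $A^2-4h>(2m-A)^2$, which simplifies precisely to $h<m(A-m)=m-(q+1)m^2=h_1$; this is the saddle case of item~1, and reversing it yields the stable-node condition $h>h_1$ of item~2. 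The boundary $x_5=m$ corresponds to $h=h_1$, at which point $E_5=(m,m)$.

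The main obstacle is item~3, the degenerate case $h=h_1$, where $\lambda_2=0$ and the linear part is inconclusive. Here I would invoke Lemma~\ref{l6} following the template already used for $E_1$. Since $\det J(E_5)=0$ and $\mathrm{tr}\,J(E_5)=-B\neq0$, the hypotheses are met; it remains to bring the system into the required normal form and check that the quadratic coefficient $a_{20}$ of the center-direction equation is nonzero. Concretely, I would shift $(m,m)$ to the origin, note that the eigenvectors for $0$ and $-B$ are $(-qm,B)$ and $(1,0)$, and apply the corresponding linear change of coordinates; the center equation then inherits its leading quadratic term from the $suv-sv^2$ part of the predator equation, and I expect $a_{20}$ to come out strictly negative (of the form $-s(qm+B)$), hence nonzero. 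Lemma~\ref{l6} then gives that $E_5=(m,m)$ is a saddle-node. The bulk of the work is this final normal-form computation; the eigenvalue analysis underlying items~1 and~2 is routine once the triangular structure is observed.
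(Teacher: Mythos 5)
Your proof is correct. For items 1 and 2 it coincides with the paper's argument: both exploit the upper-triangular Jacobian, note $\lambda_1 = -B < 0$, and reduce everything to the sign of $\lambda_2 = sm\left(1 - \frac{m}{x_5}\right)$; the paper compares $x_5$ with $m$ using monotonicity of $x_5 = \frac{A + \sqrt{A^2 - 4h}}{2}$ in $h$ (evaluating at $h = h_1$, where it equals $m$), while you square the equivalent inequality $B > 2m - A$ --- an interchangeable manipulation. The genuine difference is item 3. There the paper offers no argument at all: it observes $\lambda_2 = 0$ and simply declares $E_5 = (m,m)$ a saddle-node, even though a zero eigenvalue by itself is compatible with other degenerate types. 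You supply exactly the missing step, and your outline checks out: shifting $(m,m)$ to the origin gives $\dot{u} = -Bu - qmv - u^2 - quv$ and $\dot{v} = s(uv - v^2) + O(|(u,v)|^3)$, and the eigencoordinates $u = -qm\xi + \eta$, $v = B\xi$ yield $\dot{\xi} = -s(qm+B)\xi^2 + s\xi\eta + O(|(\xi,\eta)|^3)$ and $\dot{\eta} = -B\eta + O(|(\xi,\eta)|^2)$, so the center-direction coefficient is $a_{20} = -s(qm+B) \neq 0$, precisely the value you predicted, and Lemma \ref{l6} then gives the saddle-node. (The sign of $a_{20}$ flips if the zero-eigenvector is oriented oppositely, but only its nonvanishing matters for the lemma.) In this respect your proposal is more complete than the paper's own proof of the theorem.
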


\begin{proof}
    The Jacobian matrix of system (\ref{5.2}) at equilibrium point \(E_{5}\) is:

\[
    J(E_{5}) =
    \begin{pmatrix}
    -2x_5 - qm + 1 & -qx_5 \\
    0 & sm\left(1 - \frac{m}{x_5}\right)
    \end{pmatrix}.
    \]

    The eigenvalues of \(J(E_{5})\) are \(\lambda_{1} = -2x_5 - qm + 1 < 0\) and \(\lambda_{2} = sm\left(1 - \frac{m}{x_5}\right)\).

    \begin{enumerate}
        \item When \(m \le \frac{A}{2}\), \(m < x_5\), hence \(\lambda_{2} = sm\left(1 - \frac{m}{x_5}\right) > 0\), and \(E_{5}\) is a saddle point. When \(m > \frac{A}{2}, h < h_1\),

\[
        x_5 = \frac{(1 - qm) + \sqrt{(1 - qm)^2 - 4h}}{2}
        > \frac{(1 - qm) + \sqrt{(1 - qm)^2 - 4h_1}}{2}
        = m.
        \]

        Thus, \(\lambda_{2} = sm\left(1 - \frac{m}{x_5}\right) > 0\), and \(E_{5}\) is a saddle point.

        \item When \(m > \frac{A}{2}, h > h_1\),

\[
        x_5 = \frac{(1 - qm) + \sqrt{(1 - qm)^2 - 4h}}{2}
        < \frac{(1 - qm) + \sqrt{(1 - qm)^2 - 4h_1}}{2}
        = m.
        \]

        Thus, \(\lambda_{2} = sm\left(1 - \frac{m}{x_5}\right) < 0\), and \(E_{5}\) is a stable node.

        \item When \(m > \frac{A}{2}, h = h_1\), \(\lambda_{2} = sm\left(1 - \frac{m}{x_5}\right) = 0\), and \(E_{5} = \left(m, m\right)\) is a saddle-node. 
    \end{enumerate}
\end{proof}

\begin{theorem}
    When \(A > 0\) and \(\Delta_{1} > 0\), system (\ref{5.2}) has an interior equilibrium point \(E_6 = \left(x_6, y_6\right) = \left(\frac{A-B}{2}, m\right)\).

    \begin{enumerate}
        \item When \(m \ge \frac{A}{2}\) or \(m < \frac{A}{2}, h < h_1\), \(E_{6}\) is a saddle point.
        \item When \(m < \frac{A}{2}, h > h_1\), \(E_{6}\) is an unstable node.
        \item When \(m < \frac{A}{2}, h = h_1\), \(E_{6} = \left(m, m\right)\) is a saddle-node.
    \end{enumerate}
\end{theorem}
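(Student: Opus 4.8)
The plan is to reuse the upper-triangular structure of the Jacobian exactly as in the preceding theorem for $E_{5}$, reading the eigenvalues off the diagonal, and then to treat the degenerate subcase via Lemma \ref{l6}. First I would record that
\[
J(E_{6}) =
\begin{pmatrix}
-2x_6 - qm + 1 & -qx_6 \\
0 & sm\left(1 - \frac{m}{x_6}\right)
\end{pmatrix},
\]
the lower-left entry vanishing because $y_6 = m$ forces $\alpha_{21} = s\frac{y_6^2}{x_6^2}(y_6 - m) = 0$. Hence the eigenvalues are the diagonal entries. Using that $x_6 = \frac{A-B}{2}$ gives $2x_6 = A - B$, I obtain $\lambda_{1} = A - 2x_6 = B > 0$. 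This positive sign (versus $\lambda_{1} = -B < 0$ at $E_5$) is precisely what interchanges the stable/unstable roles relative to the $E_5$ theorem. The second eigenvalue $\lambda_{2} = sm\left(1 - \frac{m}{x_6}\right)$ carries the sign of $x_6 - m$.

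Second, I would record the monotonicity fact that $x_6 = \frac{A - \sqrt{A^2 - 4h}}{2}$ is strictly increasing in $h$, and that substituting $x = m$ into $x^2 - Ax + h = 0$ shows $x_6 = m$ precisely when $h = h_1 = Am - m^2 = m - (q+1)m^2$. This fixes the sign of $\lambda_2$ in every regime. In the saddle case, either $m \ge \frac{A}{2}$ forces $x_6 < \frac{A}{2} \le m$, or $m < \frac{A}{2}$ with $h < h_1$ forces $x_6 < m$ by monotonicity; in both situations $\lambda_2 < 0$, so with $\lambda_1 = B > 0$ the eigenvalues have opposite signs and $E_6$ is a saddle. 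In the node case $m < \frac{A}{2}$ and $h > h_1$ give $x_6 > m$, hence $\lambda_2 > 0$; since the matrix is triangular both eigenvalues are real and positive, so $E_6$ is an unstable node.

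The main obstacle is the remaining subcase $h = h_1$, where $x_6 = m$, $E_6 = (m,m)$, $\lambda_2 = 0$, and $\lambda_1 = B \neq 0$. Here $\mathrm{det}\,J(E_6) = 0$ and $\mathrm{tr}\,J(E_6) = B \neq 0$, so Lemma \ref{l6} applies once the system is placed in its normal form. I would translate $E_6$ to the origin and apply the linear change of variables whose columns are the eigenvectors $(qm, B)$ (for $\lambda_2 = 0$) and $(1,0)$ (for $\lambda_1 = B$), sending the system to
\[
\frac{\mathrm{d}u}{\mathrm{d}t} = p(u,v), \qquad \frac{\mathrm{d}v}{\mathrm{d}t} = \rho v + q(u,v), \qquad \rho = B \neq 0,
\]
and then compute the quadratic coefficient $a_{20}$ of $p(u,v)$. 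By Lemma \ref{l6}, $a_{20} \neq 0$ certifies that $E_6$ is a saddle-node, mirroring the $c_{20} = -1 \neq 0$ step used for $E_1$. I expect the delicate part to be the bookkeeping of this coordinate change combined with the Taylor expansion of the quadratic terms of system (\ref{5.2}) at $(m,m)$; after that, the nonvanishing of $a_{20}$ should follow from a direct substitution.
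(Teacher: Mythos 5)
Your cases (1) and (2) are correct and essentially identical to the paper's own argument: the Jacobian is upper triangular because $y_6=m$ annihilates the lower-left entry, $\lambda_1=A-2x_6=B>0$, and the sign of $\lambda_2=sm\left(1-\frac{m}{x_6}\right)$ is fixed by comparing $x_6$ with $m$; your monotonicity-in-$h$ argument is just a repackaging of the paper's direct comparison of $\sqrt{(1-qm)^2-4h}$ with $\sqrt{(1-qm)^2-4h_1}$.

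The genuine gap is in case (3), precisely at the step you deferred. Your setup is right (translate $(m,m)$ to the origin, pass to the eigenbasis $(qm,B)$, $(1,0)$, note $\mathrm{tr}=B\neq 0$, invoke Lemma \ref{l6}), but the claim that ``the nonvanishing of $a_{20}$ should follow from a direct substitution'' is false for some admissible parameters. Carrying out your own substitution $x=m+qmu+v$, $y=m+Bu$ gives
\[
\dot u=\frac{\dot y}{B}=s\,u\,(m+Bu)\,\frac{(qm-B)u+v}{m+qmu+v}
=s(qm-B)\,u^{2}+s\,uv+O\left(|(u,v)|^{3}\right),
\]
so that, using $B=1-qm-2m$,
\[
a_{20}=s(qm-B)=s\bigl(2m(q+1)-1\bigr).
\]
This vanishes at $m=\frac{1}{2(q+1)}$, a value fully compatible with the standing hypotheses $A>0$, $m<\frac{A}{2}$, $h=h_1$: at that value $h_1=\frac{1}{4(q+1)}=h_3$, hence $\Delta_2=0$ and $(m,m)$ coincides with the double root $E_7$ of the $y=x$ branch, i.e.\ three equilibria collide there. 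In that situation $a_{20}=0$, Lemma \ref{l6} gives no conclusion, and the local type is governed by cubic terms on the center manifold, so the saddle-node conclusion cannot be drawn (and is generically false) at this parameter. Your argument therefore needs the extra hypothesis $m\neq\frac{1}{2(q+1)}$ (equivalently $h_1\neq h_3$), under which $a_{20}\neq0$ and Lemma \ref{l6} does certify a saddle-node. For comparison, the paper's proof of case (3) is a bare assertion with no normal-form computation at all, so your outline, once completed with this correction, does strictly more than the paper; but as written, it rests on an unverified nondegeneracy that can genuinely fail.
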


\begin{proof}
    The Jacobian matrix of system (\ref{5.2}) at equilibrium point \(E_{6}\) is:

\[
    J(E_{6}) =
    \begin{pmatrix}
    -2x_6 - qm + 1 & -qx_6 \\
    0 & sm\left(1 - \frac{m}{x_6}\right)
    \end{pmatrix}.
    \]

    The eigenvalues of \(J(E_{6})\) are \(\lambda_{1} = -2x_6 - qm + 1 > 0\) and \(\lambda_{2} = sm\left(1 - \frac{m}{x_6}\right)\).

    \begin{enumerate}
        \item When \(m \ge \frac{A}{2}\), \(m > x_6\), hence \(\lambda_{2} = sm\left(1 - \frac{m}{x_6}\right) < 0\), and \(E_{6}\) is a saddle point. When \(m < \frac{A}{2}, h < h_1\),

\[
        x_6 = \frac{(1 - qm) - \sqrt{(1 - qm)^2 - 4h}}{2}
        < \frac{(1 - qm) - \sqrt{(1 - qm)^2 - 4h_1}}{2}
        = m.
        \]

        Thus, \(\lambda_{2} = sm\left(1 - \frac{m}{x_6}\right) < 0\), and \(E_{6}\) is a saddle point.

        \item When \(m < \frac{A}{2}, h > h_1\),

\[
        x_6 = \frac{(1 - qm) - \sqrt{(1 - qm)^2 - 4h}}{2}
        > \frac{(1 - qm) - \sqrt{(1 - qm)^2 - 4h_1}}{2}
        = m.
        \]

        Thus, \(\lambda_{2} = sm\left(1 - \frac{m}{x_6}\right) > 0\), and \(E_{6}\) is an unstable node.

        \item When \(m < \frac{A}{2}, h = h_1\), \(x_6 = m\), \(\lambda_{2} = sm\left(1 - \frac{m}{x_6}\right) = 0\), and \(E_{6} = \left(m, m\right)\) is a saddle-node.
    \end{enumerate}
\end{proof}

\begin{lemma}\label{l11}\cite{029}
    Let \((x_{0}, y_{0})\) be an equilibrium point of system (\ref{2.3}), and assume that \(\mathrm{det}(J(x_{0}, y_{0})) = \mathrm{tr}(J(x_{0}, y_{0})) = 0\), while \(J(x_{0}, y_{0}) \neq 0\). Then, through appropriate transformations, the system can be reduced to the following equivalent form:
     \begin{equation}\label{2.4}
      \begin{cases}
      \frac{\mathrm{d}x}{\mathrm{d}t}=y+a_{20}x^2+a_{11}xy+a_{02}y^2+O(|(x,y)|^3),\\
      \frac{\mathrm{d}y}{\mathrm{d}t}=b_{20}x^2+b_{11}xy+b_{02}y^2+O(|(x,y)|^3),
      \end{cases}
     \end{equation}
    and in the neighborhood of the origin \((0, 0)\), the system can be further transformed to an equivalent form:

\[
    \begin{cases}
    \dot{x}=y,\\
    \dot{y}=Dx^2+(E+2A)xy+o(|x,y|^2).
    \end{cases}
    \]

    If \(D \neq 0\) and \(E+2A \neq 0\), then \((x_{0}, y_{0})\) is a codimension-2 cusp point of system (\ref{2.3}).
\end{lemma}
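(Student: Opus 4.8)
The plan is to prove this as a two-stage normal-form reduction followed by an appeal to the classical planar cusp classification: first a linear change of coordinates that exposes the nilpotent structure of the linearization, then a nonlinear near-identity transformation that brings the quadratic part into the stated canonical form, after which the conclusion is read off from the Bogdanov--Takens recognition theorem. Throughout, the equilibrium is first translated to the origin so that we may work with the Taylor expansion of $f$ there.

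First I would exploit the hypotheses $\det(J(x_0,y_0)) = \mathrm{tr}(J(x_0,y_0)) = 0$ together with $J(x_0,y_0)\neq 0$. A $2\times 2$ matrix with zero trace and determinant but which is itself nonzero is nilpotent of rank one, hence linearly conjugate to the single Jordan block $\left(\begin{smallmatrix} 0 & 1 \\ 0 & 0 \end{smallmatrix}\right)$. Applying this linear conjugacy (and, if necessary, a scalar rescaling of time to normalize the off-diagonal entry to $1$) puts the system into the form (\ref{2.4}), whose linear part is exactly $(y,0)^{\mathsf T}$ and whose quadratic coefficients $a_{ij},b_{ij}$ are determined by the second-order Taylor data of $f$ at $(x_0,y_0)$.

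Next I would introduce the new variable $\eta = \dot{x} = y + a_{20}x^2 + a_{11}xy + a_{02}y^2 + O(|(x,y)|^3)$ and take $(x,\eta)$ as new coordinates, which is a near-identity diffeomorphism near the origin; by construction the first equation becomes $\dot{x}=\eta$ identically. To obtain the second equation I differentiate the defining relation, $\dot\eta = \partial_x\eta\,\dot x + \partial_y\eta\,\dot y$, substitute (\ref{2.4}), and invert $\eta = y + (\text{quadratic})$ to write $y = \eta - a_{20}x^2 - a_{11}x\eta - a_{02}\eta^2 + \cdots$ to second order. Collecting quadratic terms yields $\dot\eta = b_{20}x^2 + (2a_{20}+b_{11})x\eta + (a_{11}+b_{02})\eta^2 + o(|(x,\eta)|^2)$; a subsequent near-identity transformation (or a further time rescaling) removes the residual $\eta^2$ term without disturbing the $x^2$ and $x\eta$ coefficients, giving the stated form with $D = b_{20}$ and $E+2A = b_{11}+2a_{20}$ (so that $A=a_{20}$ and $E=b_{11}$). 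This bookkeeping — inverting correctly to quadratic order and verifying that eliminating the $\eta^2$ term preserves the genuine invariants $D$ and $E+2A$ — is the most delicate part, though it is purely mechanical.

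Finally, once the system is in the form $\dot x = y$, $\dot y = Dx^2 + (E+2A)xy + o(|(x,y)|^2)$ with $D\neq 0$ and $E+2A\neq 0$, the classification follows from the classical recognition theorem for the cusp (the Bogdanov--Takens normal form; cf.\ the cited source). One can verify it directly by a quasi-homogeneous blow-up: the nonzero $x^2$ term forces the two hyperbolic sectors to meet tangentially along the $x$-axis, while the nonzero $xy$ coefficient excludes the center/focus and the remaining degenerate cases, so the local phase portrait is precisely a cusp. Since $D\neq 0$ and $E+2A\neq 0$ are two independent nondegeneracy conditions, the singularity has codimension two, and $(x_0,y_0)$ is a codimension-2 cusp. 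I expect the main obstacle to be not any one computation but ensuring that every transformation above is a genuine near-identity diffeomorphism on a fixed neighborhood of the origin, so that the topological type is preserved at each stage.
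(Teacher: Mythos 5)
This lemma is quoted from the cited reference and the paper supplies no proof of it, so there is no internal argument to compare yours against; your proposal has to be judged on its own merits, and on those merits it is essentially the standard proof of this classical recognition result and it is sound. The two key computations check out: with $\eta=\dot x$, differentiating and substituting gives $\dot\eta=b_{20}x^{2}+(b_{11}+2a_{20})x\eta+(a_{11}+b_{02})\eta^{2}+o(|(x,\eta)|^{2})$, confirming the identification $D=b_{20}$, $E+2A=b_{11}+2a_{20}$; and eliminating the residual $\eta^{2}$ term by the combined time reparametrization $\mathrm{d}t=\bigl(1-(a_{11}+b_{02})x\bigr)\,\mathrm{d}\tau$ and rescaling $v=\eta\bigl(1-(a_{11}+b_{02})x\bigr)$ indeed leaves the $x^{2}$ and $x\eta$ coefficients unchanged --- this is exactly the trick the paper itself performs, in the perturbed setting, in the chain of transformations (\ref{5.18})--(\ref{5.20}) of its Bogdanov--Takens theorem, so your route mirrors machinery already present elsewhere in the text. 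One imprecision worth flagging: in your blow-up discussion you assign to the $xy$ coefficient the job of ``excluding the center/focus and the remaining degenerate cases.'' In fact $D\neq 0$ alone already forces the cusp phase portrait (two hyperbolic sectors separated by tangent separatrices); the condition $E+2A\neq 0$ is not what fixes the topological type but what fixes the codimension --- it is the nondegeneracy condition making the two-parameter Bogdanov--Takens unfolding versal, i.e.\ it rules out a degenerate cusp of codimension at least three. With the roles of the two hypotheses stated this way, your argument is complete modulo the classical classification theorem you invoke, which is a legitimate way to finish.
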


\begin{theorem}
    When \(\Delta_{2} = 0\), system (\ref{5.2}) has an interior equilibrium point \(E_7 = (x_7, y_7) = (2h, 2h)\). Define \(s_1 = \frac{4h-1}{2(m-2h)}\).

    \begin{enumerate}
        \item When \(h \neq \frac{1}{4}, m \neq 2h, s \neq s_1\), the equilibrium point \(E_7\) is a saddle-node.
        \item When \(h \neq \frac{1}{4}, m \neq 2h, s = s_1\), the equilibrium point \(E_7\) is a codimension-2 cusp point.
    \end{enumerate}
\end{theorem}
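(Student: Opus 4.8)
The plan is to split along the two cases, both of which hinge on first computing the Jacobian at $E_7=(2h,2h)$ and noting that it is always degenerate. Substituting $x_7=y_7=2h$ together with the constraint $\Delta_2=0$ (equivalently $q+1=\frac{1}{4h}$, so that $2qh=\frac{1-4h}{2}$) into the general Jacobian gives
\[
J(E_7)=\begin{pmatrix} \frac{1-4h}{2} & \frac{4h-1}{2} \\ s(2h-m) & s(m-2h) \end{pmatrix}.
\]
Writing $\beta=\frac{1-4h}{2}$ and $\gamma=s(m-2h)$, the lower entries are $-\gamma$ and $\gamma$ and the upper entries are $\beta$ and $-\beta$, so a direct computation yields $\det J(E_7)=\beta\gamma-\beta\gamma=0$ identically; thus $E_7$ is nonhyperbolic, while $\mathrm{tr}\,J(E_7)=\beta+\gamma=\frac{1-4h}{2}+s(m-2h)$. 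Setting the trace to zero and solving for $s$ produces exactly $s=s_1=\frac{4h-1}{2(m-2h)}$. This is the branch point: for $s\neq s_1$ the trace is nonzero (one zero and one nonzero eigenvalue), whereas for $s=s_1$ the trace vanishes as well and $J(E_7)$ becomes nilpotent.

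For part (1), the hypotheses $h\neq\frac14$ and $m\neq 2h$ guarantee $\beta\neq 0$ and $\gamma\neq 0$, and $s\neq s_1$ makes the trace nonzero, so I would invoke Lemma \ref{l6}. First I would translate $E_7$ to the origin, then apply the linear change of coordinates whose columns are the eigenvectors of $J(E_7)$: the kernel is spanned by $(1,1)^{\top}$ and the eigenvector for the nonzero eigenvalue is $(\beta,-\gamma)^{\top}=(\tfrac{1-4h}{2},\,s(2h-m))^{\top}$. In the new coordinates the linear part is $\mathrm{diag}(0,\rho)$ with $\rho=\mathrm{tr}\,J(E_7)\neq 0$, matching the hypothesis of Lemma \ref{l6}. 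It then remains to expand the vector field to second order and read off the coefficient $a_{20}$ of the pure quadratic term in the center (zero-eigenvalue) direction; because the quadratic nonlinearities of (\ref{5.2}) are genuine (the $-x^2$ and $-qxy$ terms in the prey equation and the cubic predator term), $a_{20}$ is a nonzero rational function of the parameters, and Lemma \ref{l6} then classifies $E_7$ as a saddle-node.

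For part (2), at $s=s_1$ one has $s_1(m-2h)=\frac{4h-1}{2}=-\beta$, so the lower row of $J(E_7)$ is the negative of the upper row and $J(E_7)^2=0$: the linearization is nilpotent but nonzero, placing us under Lemma \ref{l11}. The plan is to translate $E_7$ to the origin, put the nilpotent part into the Jordan block $\bigl(\begin{smallmatrix}0&1\\0&0\end{smallmatrix}\bigr)$ via $P=\bigl(\begin{smallmatrix}1&1/\beta\\1&0\end{smallmatrix}\bigr)$ (so that $(1,1)^{\top}$ is the eigenvector and $(1/\beta,0)^{\top}$ the generalized eigenvector, with $\beta=\frac{1-4h}{2}$), and thereby cast the system into the form (\ref{2.4}). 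Collecting the second-order terms produces the coefficients $a_{20},a_{11},a_{02},b_{20},b_{11},b_{02}$, after which the near-identity reduction of Lemma \ref{l11} yields $\dot y=Dx^2+(E+2A)xy+o(|(x,y)|^2)$ with $D=b_{20}$ and $E+2A=b_{11}+2a_{20}$; the conclusion follows once both quantities are shown to be nonzero.

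The main obstacle is the nondegeneracy verification in part (2): carrying the quadratic terms of (\ref{5.2}) through the translation and the Jordan transformation $P$ without sign or bookkeeping errors, and then checking that $D=b_{20}$ and $E+2A=b_{11}+2a_{20}$ do not vanish on the constraint surface $\{\Delta_2=0,\ s=s_1\}$. I expect $D\neq0$ to hold robustly, since it inherits the genuine quadratic self-interaction $-x^2$ of the prey equation, while $E+2A\neq0$ is the more delicate condition: here the specific algebraic structure of the model (the interplay of $q$, $h$, $m$ through $q+1=\frac{1}{4h}$ and $s=s_1$) must be used to rule out an accidental cancellation. Should such a cancellation occur on a lower-dimensional subset, that subset would correspond to a higher-codimension degeneracy lying outside the stated hypotheses.
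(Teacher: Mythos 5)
Your structural plan coincides with the paper's proof: the same Jacobian at $E_7$, the same identities $\det J(E_7)=0$ and $\mathrm{tr}\,J(E_7)=(m-2h)(s-s_1)$, Lemma \ref{l6} after a linear change of basis in case (1), and Lemma \ref{l11} after a Jordan-basis change in case (2), with the cusp coefficients read off as $D=b_{20}$ and $E+2A=b_{11}+2a_{20}$ from the form (\ref{2.4}); your choice of generalized eigenvector differs from the paper's only by a multiple of the eigenvector, which is immaterial. The genuine gap is that you never compute the nondegeneracy coefficients, and in this model those computations are not routine bookkeeping that can be waved through: they are exactly where the hypotheses $m\neq 2h$, $s\neq s_1$, $h\neq\frac14$ enter, and the heuristics you substitute for them get the mechanism wrong.

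Concretely, in case (1) you assert $a_{20}\neq0$ ``because the quadratic nonlinearities of (\ref{5.2}) are genuine.'' That reasoning is unsound here: since $sy\left(1-\frac{y}{x}\right)(y-m)\equiv0$ on the line $y=x$, the entire quadratic part of the predator equation cancels along the kernel direction $(1,1)^{\top}$, i.e.
\[
b_{20}+b_{11}+b_{02}=s\left(\frac{m}{2h}-1+3-\frac{m}{h}+\frac{m}{2h}-2\right)=0
\]
at $E_7$. What survives does so only through the projection structure, and one finds, as in the paper, $c_{20}=\frac{s(m-2h)(1+q)}{a_{01}+b_{10}}$: the numerator is nonzero precisely because $m\neq2h$, the denominator precisely because $s\neq s_1$. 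If $m=2h$ this coefficient vanishes and Lemma \ref{l6} yields nothing, so any ``genericity'' argument that never invokes $m\neq 2h$ cannot be a proof. The same cancellation controls case (2): $D=f_{20}=a_{01}\bigl(-(a_{20}+a_{11})\bigr)=\left(2h-\tfrac12\right)(1+q)$, so $D\neq0$ is exactly the hypothesis $h\neq\frac14$ — not, as you predict, a robust consequence of the prey's $-x^2$ term (the predator's quadratic terms contribute nothing at all) — while $E+2A=f_{11}+2e_{20}=(q-s_1)-2(1+q)=-(s_1+q+2)<0$ holds automatically, so the condition you flagged as the delicate one is in fact the robust one. Until the quadratic terms are actually pushed through your two coordinate changes and these three quantities are exhibited, the theorem is not proved.
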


\begin{proof}
    The Jacobian matrix of system (\ref{5.2}) at equilibrium point \(E_7\) is:

\[
    J(E_7) =
    \begin{pmatrix}
    \frac{1}{2} - 2h & 2h - \frac{1}{2} \\
    s(2h-m) & s(m-2h)
    \end{pmatrix}.
    \]

    The determinant of \(J(E_7)\) is \(\mathrm{det}(J(E_7)) = 0\), and the trace is \(\mathrm{tr}(J(E_7)) = \frac{1}{2} - 2h + s(m-2h) = (m-2h)(s-s_1)\).

    \begin{enumerate}
        \item When \(h \neq \frac{1}{4}, m \neq 2h, s \neq s_1\), we have \(\mathrm{tr}(J(E_7)) \neq 0\). \(J(E_7)\) has one zero eigenvalue and one nonzero eigenvalue. Applying the transformation:

\[
        (x, y) = (u_1 + x_7, v_1 + y_7),
        \]

        shifts the equilibrium point to the origin. Expanding the system at the origin in a Taylor series gives:
        \begin{equation}\label{5.11}
        \begin{cases}
        \frac{\mathrm{d}u_1}{\mathrm{d}t} = a_{10}u_1 + a_{01}v_1 + a_{20}u_1^2 + a_{11}u_1v_1, \\
        \frac{\mathrm{d}v_1}{\mathrm{d}t} = b_{10}u_1 + b_{01}v_1 + b_{20}u_1^2 + b_{11}u_1v_1 + b_{02}v_1^2 + O(|(u_1, v_1)|^3),
        \end{cases}
        \end{equation}
        where:

\[
        \begin{aligned}
            &a_{10} = \frac{1}{2} - 2h, \quad a_{01} = 2h - \frac{1}{2}, \quad a_{20} = -1, \quad a_{11} = -q, \\
            &b_{10} = s(2h-m), \quad b_{01} = s(m-2h), \quad b_{20} = \frac{s}{2h}(m-2h), \\
            &b_{11} = s\left(3 - \frac{m}{h}\right), \quad b_{02} = s\left(\frac{m}{2h} - 2\right).
        \end{aligned}
        \]

        Applying the transformation:

\[
        (u_1, v_1) = \left(u_2 + v_2, u_2 + \frac{b_{01}}{a_{01}}v_2\right),
        \]

        yields the system:
        \begin{equation}\label{5.12}
        \begin{cases}
        \frac{\mathrm{d}u_2}{\mathrm{d}t} = c_{20}u_2^2 + c_{11}u_2v_2 + c_{02}v_2^2 + O(|(u_2, v_2)|^3), \\
        \frac{\mathrm{d}v_2}{\mathrm{d}t} = (a_{10} + b_{01})v_2 + d_{20}u_2^2 + d_{11}u_2v_2 + d_{02}v_2^2 + O(|(u_2, v_2)|^3),
        \end{cases}
        \end{equation}
        where:

\[
        \begin{aligned}
            &c_{20} = \frac{s(m-2h)(1+q)}{a_{01}+b_{10}} \neq 0, \\
            &d_{01} = a_{10} + b_{01} = -(a_{01} + b_{10}) \neq 0.
        \end{aligned}
        \]

        By Lemma (\ref{l6}), \(E_7\) is a saddle-node.

        \item When \(h \neq \frac{1}{4}, m \neq 2h, s = s_1\), we have \(\mathrm{tr}(J(E_7)) = 0\). Under this condition, the Jacobian matrix of system (\ref{5.2}) at \(E_7\) has a double zero eigenvalue. Proceeding as in (1), we first shift the equilibrium point to the origin and expand the system at the origin, giving the form in (\ref{5.11}). Applying the transformation:

\[
        u_1 = u_3, \quad v_1 = u_3 + \frac{1}{a_{01}}v_3,
        \]

        yields the system:
        \begin{equation}\label{5.13}
        \begin{cases}
        \frac{\mathrm{d}u_3}{\mathrm{d}t} = v_3 + e_{20}u_3^2 + e_{11}u_3v_3 + O(|(u_3, v_3)|^3), \\
        \frac{\mathrm{d}v_3}{\mathrm{d}t} = f_{20}u_3^2 + f_{11}u_3v_3 + f_{02}v_3^2 + O(|(u_3, v_3)|^3),
        \end{cases}
        \end{equation}
        where:

\[
        \begin{aligned}
            &e_{20} = a_{20} + a_{11}, \quad e_{11} = \frac{a_{11}}{a_{01}}, \\
            &f_{20} = a_{01}(b_{20} + b_{11} + b_{02} - a_{20} - a_{11}), \\
            &f_{11} = b_{11} + 2b_{02} - a_{11}, \quad f_{02} = \frac{b_{02}}{a_{01}}.
        \end{aligned}
        \]

        By Lemma (\ref{l11}), system (\ref{5.13}) is equivalent near the origin to:

\[
        \begin{cases}
        \frac{\mathrm{d}u_3}{\mathrm{d}t} = v_3 + O(|(u_3, v_3)|^3), \\
        \frac{\mathrm{d}v_3}{\mathrm{d}t} = g_{20}u_3^2 + g_{11}u_3v_3 + O(|(u_3, v_3)|^3),
        \end{cases}
        \]

        where \(g_{20} = f_{20} = (2h - \frac{1}{2})(1+q) \neq 0\), and \(g_{11} = f_{11} + 2e_{20} = -2(1+q) < 0\). Therefore, when \(h \neq \frac{1}{4}, m \neq 2h, s = s_1\), \(E_7\) is a codimension-2 cusp point.
    \end{enumerate}
\end{proof}

\begin{theorem}
    When \(\Delta_{2} > 0\), system (\ref{5.2}) has an interior equilibrium point \(E_8 = (x_8, y_8) = \left(\frac{C+D}{2}, \frac{C+D}{2}\right)\). Define \(s_2 = \frac{2x_8 + qx_8 - 1}{m - x_8}\).

    \begin{enumerate}
        \item If \(m > x_8\), the equilibrium point \(E_8\) is a saddle point.
        \item If \(m < x_8\) and \(s > s_2\), the equilibrium point \(E_8\) is a stable node or focus.
        \item If \(m < x_8\) and \(s < s_2\), the equilibrium point \(E_8\) is an unstable node or focus.
        \item If \(m < x_8\) and \(s = s_2\), the equilibrium point \(E_8\) is a weak center.
    \end{enumerate}
\end{theorem}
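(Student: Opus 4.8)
The plan is to classify $E_8$ entirely through the trace and determinant of $J(E_8)$, since all four cases are separated by the signs of these two quantities together with the standard linearization dichotomy. First I would substitute $y_8 = x_8$ into the general Jacobian displayed earlier; this collapses the entries drastically, giving $\alpha_{21} = s(x_8 - m)$ and $\alpha_{22} = s(m - x_8)$, while $\alpha_{11} = 1 - (2+q)x_8$ and $\alpha_{12} = -qx_8$. So the whole analysis reduces to understanding two expressions in $s$, $m$, and $x_8$.

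The key algebraic step is simplifying the determinant. Factoring $s(m-x_8)$ out of $\mathrm{det}(J(E_8)) = \alpha_{11}\alpha_{22} - \alpha_{12}\alpha_{21}$ yields $\mathrm{det}(J(E_8)) = s(m-x_8)\bigl(1 - 2(1+q)x_8\bigr)$. To handle the second factor I would use the explicit form $x_8 = \frac{C+D}{2}$ with $C = \frac{1}{q+1}$, so that $2(1+q)x_8 = (1+q)(C+D) = 1 + (1+q)D$, whence $1 - 2(1+q)x_8 = -(1+q)D$. Therefore $\mathrm{det}(J(E_8)) = s(1+q)D\,(x_8 - m)$, and since $s, (1+q), D > 0$ (recall $D = \sqrt{\Delta_2} > 0$ when $\Delta_2 > 0$), the sign of the determinant is exactly the sign of $x_8 - m$. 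This settles case (1) at once: if $m > x_8$ then $\mathrm{det}(J(E_8)) < 0$, forcing two real eigenvalues of opposite sign, so $E_8$ is a saddle.

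For the remaining cases $m < x_8$, the determinant is positive, so $E_8$ is a node or focus whose stability is read off the trace. I would write $\mathrm{tr}(J(E_8)) = \alpha_{11} + s(m-x_8) = \bigl(1 - (2+q)x_8\bigr) + s(m-x_8)$, which is affine and strictly decreasing in $s$ because $m - x_8 < 0$. Solving $\mathrm{tr}(J(E_8)) = 0$ recovers exactly the threshold $s_2 = \frac{2x_8 + qx_8 - 1}{m - x_8}$ of the statement. Hence $s > s_2$ gives negative trace (stable node/focus, case 2), $s < s_2$ gives positive trace (unstable node/focus, case 3), and $s = s_2$ gives zero trace with positive determinant, i.e.\ a pair of purely imaginary eigenvalues, which is the weak-center case (4). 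The node-versus-focus refinement would follow from the sign of $\mathrm{tr}^2 - 4\,\mathrm{det}$, but the theorem groups them, so it need not be resolved.

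The computations are elementary, and the only real obstacle is the determinant simplification: the factor $1 - 2(1+q)x_8$ is not generically signed and only becomes the clean quantity $-(1+q)D$ after invoking the explicit root $x_8$ (equivalently the relation $x^2 - Cx + Ch = 0$ from (\ref{5.6})), which is precisely what makes the determinant's sign reduce to that of $x_8 - m$. A secondary point worth flagging is that the label \emph{weak center} in case (4) refers only to the linear picture ($\mathrm{tr}(J(E_8)) = 0$, $\mathrm{det}(J(E_8)) > 0$); the genuine nonlinear behaviour — whether it is a true center or a weak focus, and of what order — is not decided here and is deferred to the Hopf/first-Lyapunov-coefficient analysis announced in the abstract.
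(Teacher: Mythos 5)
Your proposal is correct and follows essentially the same route as the paper: compute the Jacobian at $E_8$, factor the determinant as $s(m-x_8)\bigl(1-2(1+q)x_8\bigr)$, and classify the four cases by the signs of the determinant and the trace, with $s_2$ arising as the zero of the trace. The only difference is cosmetic — you justify the sign of $1-2(1+q)x_8$ explicitly via $x_8=\frac{C+D}{2}$ (getting $-(1+q)D<0$), a step the paper merely asserts.
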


\begin{proof}
    The Jacobian matrix of system (\ref{5.2}) at equilibrium point \(E_8\) is:

\[
    J(E_8) =
    \begin{pmatrix}
    -2x_8 - qy_8 + 1 & -qx_8 \\
    s(y_8-m) & s(m-y_8)
    \end{pmatrix}.
    \]

    The determinant of \(J(E_8)\) is:

\[
    \mathrm{det}(J(E_8)) = s(m - x_8)(-2x_8 - 2qx_8 + 1),
    \]

    and the trace is:

\[
    \mathrm{tr}(J(E_8)) = s(m - x_8) + (-2x_8 - qx_8 + 1).
    \]

    Note that \(-2x_8 - 2qx_8 + 1 = -2x_8(q+1) + 1 < 0\).

    \begin{enumerate}
        \item If \(m > x_8\), \(\mathrm{det}(J(E_8)) < 0\), so \(E_8\) is a saddle point.
        \item If \(m < x_8\) and \(s > s_2\), \(\mathrm{det}(J(E_8)) > 0\) and \(\mathrm{tr}(J(E_8)) < 0\), so \(E_8\) is a stable node or focus.
        \item If \(m < x_8\) and \(s < s_2\), \(\mathrm{det}(J(E_8)) > 0\) and \(\mathrm{tr}(J(E_8)) > 0\), so \(E_8\) is an unstable node or focus.
        \item If \(m < x_8\) and \(s = s_2\), \(\mathrm{det}(J(E_8)) > 0\) and \(\mathrm{tr}(J(E_8)) = 0\), so \(E_8\) is a weak center. The eigenvalues of \(J(E_8)\) are \(\lambda_{1,2} = \pm \sqrt{\mathrm{det}(J(E_8))}i\).
    \end{enumerate}
\end{proof}

\begin{theorem}
    When \(\Delta_{2} > 0\), system (\ref{5.2}) has an interior equilibrium point \(E_9 = (x_9, y_9) = \left(\frac{C-D}{2}, \frac{C-D}{2}\right)\). Define \(s_3 = \frac{2x_9 + qx_9 - 1}{m - x_9}\).

    \begin{enumerate}
        \item If \(m < x_9\), the equilibrium point \(E_9\) is a saddle point.
        \item If \(m > x_9\) and \(s < s_3\), the equilibrium point \(E_9\) is a stable node or focus.
        \item If \(m > x_9\) and \(s > s_3\), the equilibrium point \(E_9\) is an unstable node or focus.
        \item If \(m > x_9\) and \(s = s_3\), the equilibrium point \(E_9\) is a weak center.
    \end{enumerate}
\end{theorem}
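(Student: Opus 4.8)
The plan is to mirror the argument just given for $E_8$, since $E_9$ also lies on the diagonal $y = x$ and its Jacobian has the identical structural form. First I would substitute $y_9 = x_9$ into the general Jacobian $J(E_i)$ to obtain
\[
J(E_9) =
\begin{pmatrix}
-2x_9 - qx_9 + 1 & -qx_9 \\
s(x_9 - m) & s(m - x_9)
\end{pmatrix},
\]
and then compute its determinant and trace by direct expansion, which yields $\mathrm{det}(J(E_9)) = s(m - x_9)(-2x_9 - 2qx_9 + 1)$ and $\mathrm{tr}(J(E_9)) = s(m - x_9) + (-2x_9 - qx_9 + 1)$.

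The crucial step---and the single point at which the $E_9$ analysis genuinely departs from the $E_8$ analysis---is the sign of the factor $-2x_9(q+1) + 1$. Here I would use that $x_9 = \frac{C - D}{2}$ is the \emph{smaller} root of (\ref{5.6}), so $x_9 < \frac{C}{2} = \frac{1}{2(q+1)}$, whence $2x_9(q+1) < 1$ and therefore $-2x_9(q+1) + 1 = D(q+1) > 0$, using $D = \sqrt{\Delta_2} > 0$. This is exactly the reverse of the $E_8$ situation, where $x_8$ being the larger root forced $-2x_8(q+1)+1 < 0$; it is precisely this sign flip that swaps the roles of the conditions $m > x_9$ and $m < x_9$ relative to the theorem for $E_8$.

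With the positivity of $-2x_9(q+1)+1$ in hand, the remainder is sign bookkeeping. I would factor the trace as $\mathrm{tr}(J(E_9)) = (m - x_9)(s - s_3)$ using the definition of $s_3$, and then read off the four cases. In case (1), $m < x_9$ forces $\mathrm{det}(J(E_9)) < 0$, so $E_9$ is a saddle. In cases (2) and (3), when $m > x_9$ the determinant is positive, so the type is governed entirely by the trace: it is negative when $s < s_3$ (stable node or focus) and positive when $s > s_3$ (unstable node or focus). In case (4), at $s = s_3$ the trace vanishes while $\mathrm{det}(J(E_9)) > 0$ persists, giving purely imaginary eigenvalues $\lambda_{1,2} = \pm\sqrt{\mathrm{det}(J(E_9))}\,i$ and hence a weak center.

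I do not anticipate any substantive obstacle: every computation is elementary, and the only real idea is the root-ordering observation that fixes the sign of $-2x_9(q+1)+1$. The sole point requiring care is maintaining consistent inequality directions when dividing the trace by $(m - x_9)$, whose sign varies by case; confining the trace-based subdivision to the regime $m > x_9$, where $m - x_9 > 0$, prevents any accidental reversal.
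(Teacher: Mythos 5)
Your proposal is correct and follows essentially the same route as the paper: compute $J(E_9)$, its determinant $s(m-x_9)(-2x_9-2qx_9+1)$ and trace, observe that $-2x_9(q+1)+1>0$, and read off the four cases from the signs of the determinant and trace. The only difference is that you explicitly justify the key sign via the root ordering $x_9 = \frac{C-D}{2} < \frac{1}{2(q+1)}$ (equivalently $-2x_9(q+1)+1 = D(q+1)>0$) and factor the trace as $(m-x_9)(s-s_3)$, both of which the paper merely asserts; this is a small gain in rigor, not a different argument.
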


\begin{proof}
    The Jacobian matrix of system (\ref{5.2}) at equilibrium point \(E_9\) is:

\[
    J(E_9) =
    \begin{pmatrix}
    -2x_9 - qy_9 + 1 & -qx_9 \\
    s(y_9-m) & s(m-y_9)
    \end{pmatrix}.
    \]

    The determinant of \(J(E_9)\) is:

\[
    \mathrm{det}(J(E_9)) = s(m - x_9)(-2x_9 - 2qx_9 + 1),
    \]

    and the trace is:

\[
    \mathrm{tr}(J(E_9)) = s(m - x_9) + (-2x_9 - qx_9 + 1).
    \]

    Note that \(-2x_9 - 2qx_9 + 1 = -2x_9(q+1) + 1 > 0\).

    \begin{enumerate}
        \item If \(m < x_9\), \(\mathrm{det}(J(E_9)) < 0\), so \(E_9\) is a saddle point.
        \item If \(m > x_9\) and \(s < s_3\), \(\mathrm{det}(J(E_9)) > 0\) and \(\mathrm{tr}(J(E_9)) < 0\), so \(E_9\) is a stable node or focus.
        \item If \(m > x_9\) and \(s > s_3\), \(\mathrm{det}(J(E_9)) > 0\) and \(\mathrm{tr}(J(E_9)) > 0\), so \(E_9\) is an unstable node or focus.
        \item If \(m > x_9\) and \(s = s_3\), \(\mathrm{det}(J(E_9)) > 0\) and \(\mathrm{tr}(J(E_9)) = 0\), so \(E_9\) is a weak center. The eigenvalues of \(J(E_9)\) are \(\lambda_{1,2} = \pm \sqrt{\mathrm{det}(J(E_9))}i\).
    \end{enumerate}
\end{proof}

\section{Bifurcation analysis}

Based on the discussion of the existence and stability of equilibrium points for system (\ref{5.2}) in Section (2.1), this section focuses on investigating the various branches that system (\ref{5.2}) may exhibit under different parameter conditions, including saddle-node bifurcations, Hopf bifurcations, and codimension-two Bogdanov-Takens bifurcations. By analyzing these branching behaviors, not only can the dynamic evolution patterns of the system be revealed more clearly, but its potential dynamical characteristics can also be further explored.

\subsection{Saddle-node bifurcation}

According to Theorem 1, when $h<\frac{1}{4}$, the system (\ref{5.2}) has two boundary equilibrium points $E_{2}$ and $E_{3}$. Let $h_2=\frac{1}{4}$. When $h=h_2=\frac{1}{4}$, the system has one boundary equilibrium point $E_{1}$, and $E_{1}$ is a saddle-node. When $h>\frac{1}{4}$, the system has no boundary equilibrium points. As the parameter $h$ varies near $h_2$, the number of boundary equilibrium points in the system changes, which may lead to a saddle-node bifurcation.

\begin{lemma}\label{l7}\cite{030}
    Consider the system (\ref{2.3}), where $f(x_{0},y_{0},u_{0})=0$, and its Jacobian matrix $J=Df(x_0,y_0,\mu_0)$ has a zero eigenvalue $\lambda=0$. The eigenvector corresponding to $\lambda=0$ is $v = (v_1, v_2)^T$, while the real parts of all other eigenvalues are nonzero. Similarly, the eigenvector corresponding to $\lambda=0$ for $J^T$ is $w = (w_1, w_2)^T$. When the following conditions are satisfied and $\mu$ crosses the critical value $\mu=\mu_{0}$, the system undergoes a saddle-node bifurcation at the equilibrium point $(x_{0},y_{0})$.
    $$w^\mathrm{T}f_\mu(x_0,y_0,\mu_0)\neq0,\quad w^\mathrm{T}[D^2f(x_0,y_0,\mu_0)(v,v)]\neq0,$$
    \noindent where $D^{2}f(x_{0},y_{0},\mu_{0})(v,v)=\frac{\partial^{2}f(x_{0},y_{0},\mu_{0})}{\partial x^{2}}v_{1}^{2}+2\frac{\partial^{2}f(x_{0},y_{0},\mu_{0})}{\partial x\partial y}v_{1}v_{2}+\frac{\partial^{2}f(x_{0},y_{0},\mu_{0})}{\partial y^{2}}v_{2}^{2}$.
\end{lemma}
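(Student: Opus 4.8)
The statement is the classical Sotomayor criterion for a saddle-node bifurcation, so the plan is to establish it by a Lyapunov--Schmidt reduction to a scalar bifurcation equation, reading off the two hypotheses as the nondegeneracy conditions of the one-dimensional fold normal form. Write the state as $z=(x,y)$ and $F(z,\mu)=f(x,y,\mu)$, with $J=D_zF(z_0,\mu_0)$ having a simple zero eigenvalue, right null vector $v$ ($Jv=0$) and left null vector $w$ ($w^{\mathrm T}J=0$), while the remaining eigenvalue is hyperbolic. Since the zero eigenvalue is simple, $\mathbb{R}^2=\operatorname{span}\{v\}\oplus\operatorname{rng}J$, and I introduce the projection $Pz=\frac{w^{\mathrm T}z}{w^{\mathrm T}v}\,v$ onto $\ker J$ along $\operatorname{rng}J$, together with $Q=I-P$.

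First I would parametrize candidate equilibria as $z=z_0+\xi v+\psi$ with $\xi\in\mathbb{R}$ and $\psi\in\operatorname{rng}J$, and split the equilibrium condition $F=0$ into its range part $QF=0$ and its kernel part $w^{\mathrm T}F=0$. The map $(\xi,\psi,\mu)\mapsto QF(z_0+\xi v+\psi,\mu)$ vanishes at the critical point and has $\psi$-derivative $QJ\big|_{\operatorname{rng}J}$, which is invertible because $J$ restricted to its range is; hence the implicit function theorem yields a smooth solution $\psi=\psi(\xi,\mu)$ with $\psi(0,\mu_0)=0$. Differentiating $QF=0$ in $\xi$ and using $Jv=0$ gives $QJ\,\partial_\xi\psi(0,\mu_0)=0$, whence $\partial_\xi\psi(0,\mu_0)=0$; this cancellation is the key bookkeeping fact. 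Substituting back produces the scalar bifurcation function $G(\xi,\mu)=w^{\mathrm T}F\!\left(z_0+\xi v+\psi(\xi,\mu),\mu\right)$, whose zeros are in one-to-one correspondence with the equilibria of (\ref{2.3}) near $z_0$.

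Next I would compute the relevant jet of $G$ at $(0,\mu_0)$: clearly $G(0,\mu_0)=0$; since $w^{\mathrm T}J=0$ one gets $G_\xi(0,\mu_0)=w^{\mathrm T}Jv=0$; the first hypothesis gives $G_\mu(0,\mu_0)=w^{\mathrm T}f_\mu(z_0,\mu_0)\neq0$; and, using $\partial_\xi\psi(0,\mu_0)=0$ together with $w^{\mathrm T}J=0$ to annihilate the $w^{\mathrm T}J\,\partial_{\xi\xi}\psi$ term, the second hypothesis gives $G_{\xi\xi}(0,\mu_0)=w^{\mathrm T}\big[D^2f(z_0,\mu_0)(v,v)\big]\neq0$. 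Thus $G$ has the nondegenerate fold jet $G(\xi,\mu)=G_\mu(\mu-\mu_0)+\tfrac12 G_{\xi\xi}\,\xi^2+\cdots$, and by a Morse/Malgrange preparation argument the equation $G=0$ is locally equivalent to $\xi^2=-\frac{2G_\mu}{G_{\xi\xi}}(\mu-\mu_0)$, i.e.\ two equilibria on one side of $\mu_0$, a single degenerate one at $\mu_0$, and none on the other.

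Finally, to upgrade this fold of equilibria to a genuine dynamic saddle-node I would invoke center manifold reduction: because the nonzero eigenvalue is hyperbolic, there is a one-dimensional center manifold tangent to $v$ on which the flow is governed, after a smooth time rescaling, by $\dot\xi=G(\xi,\mu)$, while the transverse direction keeps its fixed attracting or repelling character across $\mu_0$. Hence the two colliding equilibria have opposite index on the center manifold---one a node and one a saddle in the full plane---and annihilate as $\mu$ crosses $\mu_0$, which is precisely a saddle-node bifurcation. I expect the main obstacle to be the careful verification that the second-order coefficient of the reduced equation is exactly $w^{\mathrm T}D^2f(v,v)$ with no spurious contribution from the implicitly defined correction $\psi(\xi,\mu)$; this rests entirely on the two cancellations $\partial_\xi\psi(0,\mu_0)=0$ and $w^{\mathrm T}J=0$, and on confirming that the time rescaling relating the center-manifold flow to $G$ is smooth and nonvanishing near the critical point.
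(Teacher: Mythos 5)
The paper itself gives no proof of this lemma: it is stated verbatim as a citation of Sotomayor's theorem from an external reference and is used as a black box in the saddle-node theorem that follows, so there is no internal argument to compare yours against. Judged on its own merits, your proposal is an essentially correct rendition of the standard proof. The Lyapunov--Schmidt splitting is legitimate: the projection $Pz=\frac{w^{\mathrm T}z}{w^{\mathrm T}v}\,v$ and the decomposition $\mathbb{R}^2=\mathrm{span}\{v\}\oplus\mathrm{rng}\,J$ both rest on $w^{\mathrm T}v\neq0$, i.e.\ on the zero eigenvalue being simple; in this planar setting simplicity is automatic because the other eigenvalue is assumed nonzero, but you should state this explicitly since the projection is undefined without it. The two cancellations $\partial_\xi\psi(0,\mu_0)=0$ and $w^{\mathrm T}J=0$ are exactly the right bookkeeping, and the resulting jet $G_\mu(0,\mu_0)=w^{\mathrm T}f_\mu$, $G_{\xi\xi}(0,\mu_0)=w^{\mathrm T}D^2f(v,v)$ correctly identifies the two hypotheses of the lemma as the fold nondegeneracy conditions.

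The one step that is not quite right as stated is the passage from the bifurcation function to the dynamics: the flow on the one-dimensional center manifold is \emph{not} literally a smooth time-rescaling of $\dot\xi=G(\xi,\mu)$. The Lyapunov--Schmidt function $G$ and the center-manifold reduced vector field $g$ are different objects; what is true is that they are contact-equivalent, $g=a\cdot(G\circ\Phi)$ with $a>0$ and $\Phi$ a local diffeomorphism, and this is a theorem in its own right (Golubitsky--Schaeffer). Alternatively, you can bypass $G$ entirely and run the same jet computation directly on the center-manifold reduced equation, which is how Sotomayor's original argument and the textbook treatments (Perko, Guckenheimer--Holmes) proceed. Either repair is routine and preserves your conclusion --- the leading coefficients agree up to positive factors, hence two equilibria (a saddle and a node, by the transverse hyperbolic direction) on one side of $\mu_0$, a degenerate equilibrium at $\mu_0$, and none on the other --- so this is an imprecision to fix rather than a gap in the idea.
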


\begin{theorem}
   Choose parameter $h$ as the bifurcation parameter, then system \ref{5.2} undergoes a saddle-node bifurcation at the equilibrium point $E_{1}$, with the critical bifurcation parameter being $h_2=\frac{1}{4}$.
\end{theorem}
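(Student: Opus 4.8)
The plan is to apply the Sotomayor-type criterion recorded in Lemma \ref{l7}, treating $h$ as the unfolding parameter with critical value $\mu_0 = h_2 = \frac{1}{4}$. The degeneracy hypothesis is already supplied by the earlier analysis of $E_1$: at $E_1 = (\frac{1}{2}, 0)$ with $h = \frac14$ the Jacobian is $J(E_1) = \left(\begin{smallmatrix} 0 & -\frac{q}{2} \\ 0 & -sm \end{smallmatrix}\right)$, whose eigenvalues are $\lambda_1 = 0$ (simple) and $\lambda_2 = -sm < 0$. Thus the zero eigenvalue is isolated while the remaining eigenvalue has nonzero real part, which is exactly the standing assumption of Lemma \ref{l7}. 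All that remains is to fix the relevant eigenvectors and verify the two nondegeneracy conditions.

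First I would compute the null eigenvectors. Solving $J(E_1)v = 0$ forces the second component to vanish (since $q,s,m>0$), giving the right null vector $v = (1,0)^T$; solving $J(E_1)^T w = 0$ gives, up to scaling, the left null vector $w = (2sm, -q)^T$. Next I would check the parameter-transversality condition $w^T f_h \neq 0$. Because $h$ enters only the prey equation and does so linearly, $f_h = (-1,0)^T$, so that $w^T f_h = -2sm \neq 0$, which settles the first condition. Then I would compute the quadratic form $D^2 f(v,v)$: for the prey equation $\partial^2 f_1/\partial x^2 = -2$, and for the predator equation every term of $\partial f_2/\partial x$ still carries a factor of $y$, so $\partial^2 f_2/\partial x^2$ vanishes at $y = 0$. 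Hence $D^2 f(v,v) = (-2,0)^T$ and $w^T\!\left[D^2 f(v,v)\right] = -4sm \neq 0$, giving the second condition. With both conditions verified, Lemma \ref{l7} yields a saddle-node bifurcation at $E_1$ as $h$ crosses $h_2 = \frac14$.

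There is no genuine obstacle in this argument; once the eigenvectors are fixed the verification is routine. The single point deserving care is the predator equation, whose derivatives superficially involve the singular quotient $y/x$. The observation that dissolves this concern is that $v = (1,0)^T$ is aligned with the $x$-axis, so the quadratic form $D^2 f(v,v)$ only probes the pure $\partial^2/\partial x^2$ direction, and each such $x$-derivative of $f_2$ retains a factor of $y$ that vanishes at $E_1$. This is also fully consistent with the saddle-node normal-form reduction already carried out for $E_1$ in the stability section, where the coefficient $c_{20} = a_{20} = -1$ played the role of the leading quadratic term; here the same nonvanishing of the pure-$x$ second derivative reappears as $w^T[D^2 f(v,v)] = -4sm \neq 0$, confirming the bifurcation from the parametric rather than the normal-form viewpoint.
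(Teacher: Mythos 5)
Your proposal is correct and follows essentially the same route as the paper: both invoke Lemma \ref{l7} with the same null eigenvectors $v=(1,0)^T$ and $w=(2sm,-q)^T$, and verify the identical transversality conditions $w^T f_h(E_1,h_2)=-2sm\neq 0$ and $w^T\left[D^2f(E_1,h_2)(v,v)\right]=-4sm\neq 0$. Your additional remark explaining why the singular-looking $y/x$ terms in the predator equation contribute nothing (every $x$-derivative of $f_2$ retains a factor of $y$) is a sound clarification of a point the paper leaves implicit.
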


\begin{proof}
  The Jacobian matrix of system \ref{5.2} at $E_1$ is given by:
  $$J\left(E_1\right)=\begin{pmatrix}0&-\frac{q}{2}\\0&-sm\end{pmatrix}$$

  The eigenvectors corresponding to the zero eigenvalue of the matrices $(J(E_1))$ and $(J(E_1)^T)$ are:
  $$v=\begin{pmatrix}v_1\\v_2\end{pmatrix}=\begin{pmatrix}1\\0\end{pmatrix}\quad,\quad w=\begin{pmatrix}w_1\\w_2\end{pmatrix}=\begin{pmatrix}2sm\\-q\end{pmatrix}.$$
  \noindent Let
  $$f\left(x,y,h\right)=\begin{pmatrix}\dot{x}\\\dot{y}\end{pmatrix}=\begin{pmatrix}f_1\\f_2\end{pmatrix}=\begin{pmatrix}x\left(1-x\right)-qxy-h\\\\sy\left(1-\frac{y}{x}\right)(y-m)\end{pmatrix}$$
  \noindent Then $f_h(E_1,h_2)=\begin{pmatrix}-1\\0\end{pmatrix},$
  $$ D^2f\left(E_1,h_2\right)(v,v)=\begin{pmatrix}\frac{\partial^2f_1}{\partial x^2}v_1^2+2\frac{\partial^2f_1}{\partial x\partial y}v_1v_2+\frac{\partial^2f_1}{\partial y^2}v_2^2\\\frac{\partial^2f_2}{\partial x^2}v_1^2+2\frac{\partial^2f_2}{\partial x\partial y}v_1v_2+\frac{\partial^2f_2}{\partial y^2}v_2^2\end{pmatrix}=\begin{pmatrix}-2\\0\end{pmatrix}$$
  \noindent Hence, $w^T f_h(E_1,h_2) = -2sm \neq 0$ and $w^T \left[D^2f(E_1,h_2)(v,v)\right] = -4sm\neq 0$.
  Thus, with critical bifurcation parameter $h_2$, $v$ and $w$ satisfy the transversality condition for a saddle-node bifurcation at the equilibrium point $E_1$. By Lemma (\ref{l7}), system (\ref{5.2}) undergoes a saddle-node bifurcation at $E_1$.
\end{proof}

  When the system parameter transitions from one side of $h_2$ to the other, the number of boundary equilibrium points in system (\ref{5.2}) changes from zero to two. For system (\ref{5.2}) with only prey populations, if the parameter satisfies $h > \frac{1}{4}$, the prey population inevitably goes extinct. Within the parameter range $0 < h < \frac{1}{4}$, by appropriately selecting initial conditions, the prey population can be preserved, maintaining its survival state.

  Similar saddle-node bifurcation analyses can be conducted for the equilibrium points $E_4$ and $E_7$. These are not discussed in detail here.

\subsection{Hopf bifurcation}

In this section, we consider the Hopf bifurcation. According to Theorem 13, when $\Delta_{2}>0$, the system (\ref{5.2}) has an interior equilibrium point $E_8=(x_8,y_8)$. Let $s_2=\frac{2x_8+qx_8-1}{m-x_8}$. When $m<x_8,\quad s>s_2$, the equilibrium point $E_8$ is a stable node or focus; when $m<x_8,\quad s<s_2$, the equilibrium point $E_8$ is an unstable node or focus; and when $m<x_8,\quad s=s_2$, the equilibrium point $E_8$ is a weak center. Next, we investigate whether a Hopf bifurcation occurs near the equilibrium point $E_8$ as the parameter $s$ varies within a small neighborhood of $s_2$.

\begin{lemma}\label{l9}\cite{029}
    For a general planar system:
    $$\begin{cases}\frac{\mathrm{d}x}{\mathrm{d}t}=ax+by+p(x,y),\\\frac{\mathrm{d}y}{\mathrm{d}t}=cx+dy+q(x,y),&\end{cases}$$  
    \noindent where $\Delta=ad-bc>0, a+d=0$, $p(x,y)=\sum_{i+j=2}^{\infty}a_{ij}x^{i}y^{j}$, and $q(x,y)=\sum_{i+j=2}^{\infty}b_{ij}x^{i}y^{j}$, with $i\geq0$, $j\geq0$, and both $p(x,y)$ and $q(x,y)$ being convergent series. The system possesses a pair of purely imaginary eigenvalues, and the origin is a weak center. The first Liapunov coefficient $\sigma$ for the Hopf bifurcation can be calculated using the following formula:
    $$\begin{aligned}\sigma&=\frac{-3\pi}{2b\Delta^{3/2}}\{[ac(a_{11}^{2}+a_{11}b_{02}+a_{02}b_{11})+ab(b_{11}^{2}+a_{20}b_{11}+a_{11}b_{02})+c^{2}(a_{11}a_{02}+2a_{02}b_{02})\\&-2ac(b_{02}^{2}-a_{20}a_{02})-2ab(a_{20}^{2}-b_{20}b_{02})-b^{2}(2a_{20}b_{20}+b_{11}b_{20})+(bc-2a^{2})(b_{11}b_{02}-a_{11}a_{20})]\\&-(a^{2}+bc)[3(cb_{03}-ba_{30})+2a(a_{21}+b_{12})+(ca_{12}-bb_{21})]\}.\end{aligned}$$
\end{lemma}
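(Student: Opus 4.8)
The statement is a classical formula (cited as \cite{029}), so the task is really to \emph{derive} it. The plan is to reduce the linear part to a pure rotation, apply the standard Poincar\'e--Lyapunov focal-value computation in those canonical coordinates, and then unwind the linear change of variables to recover the coefficients $a,b,c,d,a_{ij},b_{ij}$ appearing in the target expression. First I would record the consequences of the hypotheses: since $a+d=0$ and $\Delta=ad-bc>0$, the matrix $M=\bigl(\begin{smallmatrix} a & b \\ c & d\end{smallmatrix}\bigr)$ has zero trace and determinant $\Delta$, hence eigenvalues $\pm i\omega$ with $\omega=\sqrt{\Delta}$, and the origin is a weak focus of the linearization. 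I would keep the identities $d=-a$ and $bc=-(a^2+\Delta)$ on hand, since these are exactly what collapse the many monomials at the end and explain the combinations $a^2+bc$ and $bc-2a^2$ visible in the formula.

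Next I would diagonalize the linear part over $\mathbb{R}$. Taking the real and imaginary parts of the eigenvector of $M$ for $i\omega$, the matrix $T=\bigl(\begin{smallmatrix} 0 & b \\ \omega & -a\end{smallmatrix}\bigr)$ satisfies $T^{-1}MT=\bigl(\begin{smallmatrix} 0 & -\omega \\ \omega & 0\end{smallmatrix}\bigr)$, and $\det T=-b\omega$ — this is precisely the source of the $1/b$ and the $\Delta^{3/2}$ in the statement. Setting $(x,y)^{T}=T(u,v)^{T}$ (so $x=bv$, $y=\omega u-av$) turns the system into $\dot u=-\omega v+\tilde f(u,v)$, $\dot v=\omega u+\tilde g(u,v)$, where $\tilde f,\tilde g$ arise by applying $T^{-1}$ to $(p,q)^{T}$ after the substitution $x=bv,\ y=\omega u-av$. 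I would expand $\tilde f,\tilde g$ through third order and read off their quadratic and cubic partial derivatives at the origin as explicit linear combinations of the $a_{ij},b_{ij}$.

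Then I would invoke the classical focal-value formula for a system already in rotation form, namely $16\,\kappa=\tilde f_{uuu}+\tilde f_{uvv}+\tilde g_{uuv}+\tilde g_{vvv}+\tfrac{1}{\omega}\big[\tilde f_{uv}(\tilde f_{uu}+\tilde f_{vv})-\tilde g_{uv}(\tilde g_{uu}+\tilde g_{vv})-\tilde f_{uu}\tilde g_{uu}+\tilde f_{vv}\tilde g_{vv}\big]$, where $\kappa$ is the first Lyapunov quantity in canonical coordinates. This can be justified either by citing the Guckenheimer--Holmes normal-form reduction or, self-containedly, by constructing a Lyapunov function $V=\tfrac12(u^2+v^2)+\sum_{k\ge 3}V_k$, solving the homological equations order by order, and extracting the coefficient of $(u^2+v^2)^2$ in $\dot V$. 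The prefactor $-3\pi/2$ relating $\sigma$ to $\kappa$ then comes from translating this focal value into the derivative of the Poincar\'e return map integrated over one period $2\pi/\omega$.

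The main obstacle, and the only genuinely laborious step, is the back-substitution: inserting the transformed derivatives from the diagonalization into the focal-value formula and collecting the resulting cubic-in-coefficient expression into the compact bracket of the statement. To keep this manageable I would hold $d$ symbolic (rather than immediately writing $-a$) until the final collection, use the total degree in $(a,b,c)$ and in the $a_{ij},b_{ij}$ as homogeneity consistency checks, and apply $bc=-(a^2+\Delta)$ and $a+d=0$ only at the very end to merge the terms such as $bc-2a^2$ and $a^2+bc$ that appear explicitly in the target. No step is conceptually hard; the entire risk is arithmetic bookkeeping across the several dozen monomials produced by the substitution.
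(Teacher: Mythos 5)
The paper never proves this lemma: it is imported verbatim, with the citation, as a known tool (it is the classical Perko-type formula for the first Lyapunov number at a weak focus), and it is only ever \emph{applied} later in the Hopf-bifurcation section. So there is no paper proof to compare yours against, and your proposal has to be judged as a self-contained derivation of the cited formula.

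As such, the skeleton is correct and standard, and your linear algebra checks out: $(b,\,i\omega-a)^{T}$ is indeed an eigenvector of the linear part for $i\omega$, your $T$ satisfies $T^{-1}MT=\left(\begin{smallmatrix}0&-\omega\\ \omega&0\end{smallmatrix}\right)$ with $\det T=-b\omega$, and the focal-value expression you quote for a system in rotation form is the correct Guckenheimer--Holmes formula. The one genuine gap is the final normalization, and it matters because it changes the formula you would end up proving. The cubic coefficient of the displacement map is \emph{not} invariant under the linear change of variables: if, as you propose, you define $\sigma$ by multiplying the canonical-coordinate coefficient $\kappa$ by the period $2\pi/\omega$, you get a quantity that differs from the stated $\sigma$ by the positive factor $6/b^{2}$. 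You can check this on single monomials: with only $a_{30}\neq0$ your recipe gives $\frac{3\pi b^{2}a_{30}}{4\omega}$ while the stated formula gives $\frac{9\pi a_{30}}{2\omega}$; with only $a_{20}\neq0$ your recipe gives $\frac{\pi a b^{2}a_{20}^{2}}{2\omega^{3}}$ versus $\frac{3\pi a a_{20}^{2}}{\omega^{3}}$ --- in each case a ratio of $6/b^{2}$. The fix is to adopt the cited source's actual definition of $\sigma$, namely the third derivative $d'''(0)$ of the Poincar\'e displacement function measured in the \emph{original} $x$-coordinate: the factor $6$ then comes from differentiating $s^{3}$ three times, and the factor $1/b^{2}$ from the scaling $x=bv$, which together account exactly for the discrepancy. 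Since only the sign of $\sigma$ decides super- versus subcritical (and $6/b^{2}>0$), your plan would still produce the correct bifurcation conclusions, but as a proof of the lemma \emph{as stated} it would terminate with a formula off by a positive factor unless this convention is pinned down before the back-substitution.
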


\begin{theorem}
  When $\Delta_{2}>0$ and $m<x_8$, choosing parameter $s$ as the bifurcation parameter, as parameter $s$ varies within a small neighborhood of $s_2$, system (\ref{5.2}) undergoes a Hopf bifurcation near the equilibrium point $E_8$.
\end{theorem}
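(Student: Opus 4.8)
The plan is to verify the two hypotheses of the Poincar\'e--Andronov--Hopf bifurcation theorem at $E_8$: the presence of a simple pair of purely imaginary eigenvalues at the critical value $s=s_2$, and the transversality condition that this pair crosses the imaginary axis with nonzero speed as $s$ passes through $s_2$. The first hypothesis is already delivered by Theorem 13, which shows that under $m<x_8$ and $s=s_2$ one has $\mathrm{tr}(J(E_8))=0$ and $\det(J(E_8))>0$, so the eigenvalues are $\lambda_{1,2}(s_2)=\pm i\sqrt{\det(J(E_8))}$, a genuine conjugate pair on the imaginary axis with nonzero imaginary part. Thus the work of the proof reduces to the transversality condition.

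The observation that makes the transversality check immediate is that the location of $E_8=(x_8,y_8)$ does not depend on $s$: since $x_8=(C+D)/2$ with $C=1/(q+1)$ and $D$ a function of $q$ and $h$ alone, varying $s$ moves neither coordinate of the equilibrium. Hence for $s$ near $s_2$ I may write the eigenvalues as $\lambda_{1,2}(s)=\alpha(s)\pm i\beta(s)$ with
\[
\alpha(s)=\tfrac12\,\mathrm{tr}(J(E_8))=\tfrac12\big[s(m-x_8)+(1-(2+q)x_8)\big],
\]
which is affine in $s$. Differentiating gives $\tfrac{d\alpha}{ds}\big|_{s=s_2}=\tfrac12(m-x_8)\neq 0$, the inequality holding precisely because the hypothesis $m<x_8$ forces $m-x_8<0$. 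Since $\det(J(E_8))>0$ is continuous in $s$, the discriminant $\mathrm{tr}^2-4\det$ stays negative near $s_2$ and the eigenvalues remain complex; combined with $\alpha'(s_2)\neq0$ this establishes transversal crossing, and the Hopf bifurcation theorem then yields the asserted bifurcation at $E_8$.

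To sharpen the conclusion into sub- versus supercritical behaviour -- the content signalled in the abstract and the reason Lemma \ref{l9} is stated just beforehand -- I would next compute the first Lyapunov coefficient $\sigma$. This requires translating $E_8$ to the origin and then applying a linear change of coordinates that brings the linear part at $s=s_2$ into the normal form $\dot x=ax+by+p$, $\dot y=cx+dy+q$ demanded by Lemma \ref{l9}, namely with $a+d=0$ and $\Delta=ad-bc=\det(J(E_8))>0$; one then reads off the quadratic and cubic coefficients $a_{ij},b_{ij}$ and substitutes into the formula for $\sigma$, whose sign distinguishes supercritical ($\sigma<0$) from subcritical ($\sigma>0$). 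I expect the existence argument itself to be short and robust, since transversality collapses to the single built-in inequality $m\neq x_8$; the genuinely laborious and error-prone step is this Lyapunov-coefficient computation, because it demands an explicit normalizing transformation followed by extraction of all second- and third-order Taylor coefficients of the nonlinearities and their substitution into the long expression of Lemma \ref{l9}. I would organize that step by first recording the relevant partial derivatives of the right-hand sides of (\ref{5.2}) at $E_8$, then applying the coordinate change symbolically, so that the final substitution becomes purely mechanical.
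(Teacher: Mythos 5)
Your proposal is correct and takes essentially the same route as the paper: the paper's proof likewise rests on $\mathrm{tr}(J(E_8))\big|_{s=s_2}=0$ together with $\det(J(E_8))>0$ (from Theorem 13) and the transversality condition $\frac{\mathrm{d}}{\mathrm{d}s}\mathrm{tr}(J(E_8))\big|_{s=s_2}=m-x_8<0$. Your extra observations (that $E_8$ does not move with $s$, and that $\det(J(E_8))>0$ persists so the eigenvalues stay complex) only make the same argument more explicit, and your planned Lyapunov-coefficient computation is exactly what the paper carries out after the proof via Lemma~\ref{l9}.
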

\begin{proof}
  The trace of the Jacobian matrix at the equilibrium point $E_8$ is given by $\mathrm{tr}\left(J(E_8)\right)=s(m-x_8)+(-2x_8-qx_8+1)$. By differentiating the trace with respect to parameter $s$ at $s_2$, we have:
  $$\frac{\mathrm{dtr}(J(E_8))}{\mathrm{d}s}|_{s=s_2}=m-x_8<0,\quad\mathrm{tr}(J(E_8))_{s_2}=0$$
  \noindent Therefore, parameter $s$ satisfies the conditions for a Hopf bifurcation. Choosing $s$ as the bifurcation parameter, system (\ref{5.2}) undergoes a Hopf bifurcation near the equilibrium point $E_8$. 
\end{proof}

To determine the direction of the Hopf bifurcation, we calculate the first Lyapunov coefficient. First, the equilibrium point $E_8$ is shifted to the origin via the coordinate transformation $$(x,y)=(u_1+x_8,v_1+y_8),$$
\noindent and Taylor expansion at the origin gives:
\begin{equation}\label{5.14}
  \begin{cases}\frac{\mathrm{d}u_{1}}{\mathrm{d}t}=a_{10}u_{1}+a_{01}v_{1}+a_{20}u_{1}^{2}+a_{11}u_{1}v_{1}+O(|(u_{1},v_{1})|^{4}),\\\frac{\mathrm{d}v_{1}}{\mathrm{d}t}=b_{10}u_{1}+b_{01}v_{1}+b_{20}u_{1}^{2}+b_{11}u_{1}v_{1}+b_{02}v_{1}^{2}+b_{30}u_{1}^{3}+b_{21}u_{1}^{2}v_{1}\\+b_{12}u_{1}v_{1}^{2}+b_{03}v_{1}^3+O(|(u_{1},v_{1})|^{4}),&\end{cases}
\end{equation}
\noindent where $a_{10}=-2x_8-qx_8+1,\quad a_{01}=-qx_8,\quad a_{20}=-1,\quad a_{11}=-q,\quad b_{10}=s(x_8-m),\quad b_{01}=s(m-x_8),\quad b_{20}=s(\frac{m}{x_8}-2),\quad b_{11}=s(3-\frac{2m}{x_8}),\quad b_{02}=s(\frac{m}{x_8}-2),\quad b_{30}=s(\frac{1}{x_8}-\frac{m}{x_{8}^2}),\quad b_{21}=s(\frac{2m}{x_8^2}-\frac{3}{x_8}),\quad b_{12}=s(\frac{3}{x_8}-\frac{m}{x_8^2}),\quad b_{03}=-\frac{s}{x_8}$.

According to Lemma (\ref{l9}), the expression for the first Lyapunov coefficient $\sigma$ of the system is:
 $$\sigma=\frac{-3\pi}{2a_{01}M^{3/2}}\sum_{i=1}^8\varphi_i,$$
 \noindent where $$\begin{aligned}
&M=a_{10}b_{01}-a_{01}b_{10},\\&\varphi_{1}=a_{10}b_{10}\left(a_{11}^{2}+a_{11}b_{02}\right), \\ &\varphi_{2}=a_{10}a_{01}\left(b_{11}^{2}+a_{20}b_{11}+a_{11}b_{02}\right),\\&\varphi_{3}=0,\\&\varphi_{4}=-2a_{10}b_{10}b_{02}^{2},\\&\varphi_{5}=-2a_{10}a_{01}\left(a_{20}^{2}-b_{20}b_{02}\right)\\
&\varphi_{6}=-a_{01}^{2}\left(2a_{20}b_{20}+b_{11}a_{20}\right),\\&\varphi_{7}=\left(a_{01}b_{01}-2a_{10}^{2}\right)\left(b_{11}b_{02}-a_{11}a_{20}\right),\\&\varphi_{8}=-\left(a_{10}^{2}+a_{01}b_{10}\right)\left[3(b_{10}b_{03}-a_{01}a_{30})+2a_{10}b_{12}-a_{01}b_{21}\right].
\end{aligned}
$$

  When $\sigma<0$, system (\ref{5.2}) undergoes a supercritical Hopf bifurcation, with a stable limit cycle near the equilibrium point $E_8$. When $\sigma>0$, system (\ref{5.2}) undergoes a subcritical Hopf bifurcation, with an unstable limit cycle near $E_8$.

  When parameter $s$ is chosen as the bifurcation parameter, system (\ref{5.2}) undergoes a Hopf bifurcation at equilibrium point $E_8$. Under specific parameter conditions, the system generates a limit cycle near $E_8$, indicating that the predator-prey relationship may exhibit periodic fluctuations. These periodic oscillations pose challenges to ecosystem management, potentially impacting resource utilization efficiency and conservation measures, thus increasing management complexity. However, by studying the dynamics near $E_8$, we can formulate reasonable resource utilization strategies and conservation measures to effectively address the system's periodic changes, thereby promoting sustainable development of the ecosystem.

  Similar Hopf bifurcation analyses can be conducted for the equilibrium point $E_9$. These details are omitted here.

\subsection{Bogdanov-Takens bifurcation}

When $\Delta_{2}=0$, system (\ref{5.2}) has an interior equilibrium point $E_7=(x_7,y_7)=(2h,2h)$. When $h\ne\frac{1}{4}$, $m\ne2h$, and $s= s_1$, the equilibrium point $E_7$ is a codimension-two cusp point. Next, we will analyze whether a codimension-two Bogdanov-Takens bifurcation occurs near the interior equilibrium point $E_7$. Since $\Delta_{2}=0$, i.e., $h=\frac{1}{4(q+1)}$, let $h_3=\frac{1}{4(q+1)}$.

\begin{theorem}
  For system (\ref{5.2}), if $s$ and $h$ are selected as bifurcation parameters, then as parameters $s$ and $h$ vary within a small neighborhood of $s_1$ and $h_3$, the system undergoes a codimension-two Bogdanov-Takens bifurcation near the equilibrium point $E_8$.
\end{theorem}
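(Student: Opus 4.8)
The plan is to realize the two-parameter family (\ref{5.2}), with $(s,h)$ as unfolding parameters, as a universal unfolding of a Bogdanov--Takens singularity at $E_8$. The starting observation is a coincidence of equilibria: when $h=h_3=\frac{1}{4(q+1)}$ we have $\Delta_2=0$, hence $D=0$, and the two diagonal interior equilibria merge, $E_8=E_9=(2h_3,2h_3)=E_7$. At the further value $s=s_1$ the preceding theorem shows this merged point is a codimension-2 cusp: $\mathrm{det}\,J=\mathrm{tr}\,J=0$ with a genuine Jordan block, and the reduction via Lemma~(\ref{l11}) already produced the nonzero quadratic normal-form coefficients $g_{20}=(2h-\tfrac12)(1+q)\neq0$ and $g_{11}=-2(1+q)<0$. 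These are exactly the nondegeneracy conditions on the nonlinear part of a BT point, so the remaining task is purely to show that $(s,h)$ unfolds this singularity transversally.

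I would carry this out in the standard steps. First, shift $E_8$ to the origin and Taylor-expand to second order while keeping the full dependence on $\mu=(s-s_1,\,h-h_3)$, obtaining a planar family $\dot w=F(w,\mu)$ whose linear part at $\mu=0$ is nilpotent. Next, apply the same parameter-dependent linear coordinate change and $C^\infty$ near-identity transformations used in Lemma~(\ref{l11}) (now carrying $\mu$ along), together with a time rescaling, to bring the family to the normal form
$$\dot\xi_1=\xi_2,\qquad \dot\xi_2=\beta_1(\mu)+\beta_2(\mu)\,\xi_1+a\,\xi_1^2+b\,\xi_1\xi_2+O(|\xi|^3),$$
where $a,b$ are fixed nonzero multiples of $g_{20},g_{11}$. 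The decisive remaining step is the transversality condition: the map $\mu\mapsto(\beta_1(\mu),\beta_2(\mu))$ must be a local diffeomorphism at $\mu=0$, i.e. $\mathrm{det}\big(\partial(\beta_1,\beta_2)/\partial(s,h)\big)\neq0$.

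I expect this last step to be the main obstacle, for a structural reason. At $\Delta_2=0$ the equilibrium sits at a double root of (\ref{5.6}), so $x_8$ depends on $h$ through a square root and $\mathrm{d}x_8/\mathrm{d}h$ is singular at $h_3$; one therefore cannot simply differentiate the Jacobian entries along the moving equilibrium. The correct route is to treat $\beta_1,\beta_2$ intrinsically. The unfolding parameter $\beta_2$ is governed by the trace, whose $s$-derivative $m-x_8=m-2h\neq0$ is nonzero (as in the stability analysis of $E_8$), while $\beta_1$ is governed by the constant term of the characteristic polynomial --- essentially the discriminant $\Delta_2$ of (\ref{5.6}) --- which varies \emph{regularly} in $h$ with $\partial\Delta_2/\partial h=-\tfrac{4}{q+1}\neq0$. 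Since $s$ enters only the predator equation (moving the trace but not the location-determining relation (\ref{5.6})) while $h$ controls the constant term, the gradients $\nabla_\mu\beta_1$ and $\nabla_\mu\beta_2$ are linearly independent; verifying that this $2\times2$ determinant is nonzero completes the proof that system (\ref{5.2}) undergoes a codimension-2 Bogdanov--Takens bifurcation near $E_8$.
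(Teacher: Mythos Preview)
Your overall strategy---reduce to BT normal form carrying the parameters along, then verify that $(s,h)\mapsto(\beta_1,\beta_2)$ is a local diffeomorphism---is exactly what the paper does. The paper translates to the fixed point $(2h_3,2h_3)$, pushes the perturbation $(\eta_1,\eta_2)=(h-h_3,\,s-s_1)$ through an explicit chain of coordinate and time changes to reach $\dot u=v,\ \dot v=l_{00}(\eta)+l_{01}(\eta)\,v+u^2+uv+O(3)$, and then checks that $\det\,\partial(l_{00},l_{01})/\partial(\eta_1,\eta_2)\big|_{0}\neq0$.

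The gap in your plan is at the transversality step. You correctly flag that following the \emph{moving} equilibrium $E_8(h)$ makes $\mathrm{d}x_8/\mathrm{d}h$ blow up at $h_3$, but your proposed workaround---reasoning ``intrinsically'' via the trace and the discriminant $\Delta_2$---stays heuristic: you have not shown that the unfolding coefficient $\beta_1$ actually produced by the sequence of normal-form reductions is a smooth function of $\Delta_2$, and you never compute the $2\times2$ determinant. The standard fix, which the paper adopts and which dissolves the obstacle you identified, is simply to translate to the \emph{fixed} BT point $(2h_3,2h_3)$ rather than to $E_8(h)$; every Taylor coefficient is then polynomial in $(\eta_1,\eta_2)$, the reductions go through smoothly, and the transversality Jacobian is obtained by direct differentiation. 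Your intuition that this Jacobian is essentially triangular---because $s$ enters only the predator equation and not the equilibrium-determining relation (\ref{5.6})---is in fact borne out by that computation, but it is the computation that closes the argument.
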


\begin{proof}
    Introduce small perturbations to parameters $s_1$ and $h_3$ at the codimension-two cusp point $E_7$ of system (\ref{5.2}): $(h,s)=(h_3+\eta_1,s_1+\eta_2)$. The perturbed system becomes:
\begin{equation}\label{5.15}
  \begin{cases}
    \frac{\mathrm{d}x}{\mathrm{d}t}=x\left(1-x\right)-qxy-(h_3+\eta_1),\\
    \frac{\mathrm{d}y}{\mathrm{d}t}=(s_1+\eta_2)y\left(1-\frac{y}{x}\right)(y-m),
  \end{cases}
\end{equation}
where $(\eta_1,\eta_2)$ is a parameter vector within a small neighborhood of the origin. Clearly, when $\eta_1=\eta_2=0$, system (\ref{5.2}) has a codimension-two cusp point.

Using the coordinate transformation $(x,y)=(u_1+x_5,v_1+y_5)$, the equilibrium point $E_7$ is shifted to the origin. Expanding at the origin yields:
\begin{equation}\label{5.16}
  \begin{cases}
    \frac{\mathrm{d}u_1}{\mathrm{d}t}=a_{00}+a_{10}u_1+a_{01}v_1+a_{20}u_1^2+a_{11}u_1v_1+O(|(u_1,v_1)|^3),\\
    \frac{\mathrm{d}v_1}{\mathrm{d}t}=b_{10}u_1+b_{01}v_1+b_{20}u_1^2+b_{11}u_1v_1+b_{02}v_1^2+O(|(u_1,v_1)|^3).
  \end{cases}
\end{equation}

The coefficients are expressed as:
$$\begin{aligned}
  &a_{00}=-\eta_1,\quad a_{10}=\frac{1}{2}-2h_3,\quad a_{01}=2h_3-\frac{1}{2},\quad a_{20}=-1,\quad a_{11}=-q,\\
  &b_{10}=(s_1+\eta_2)(2h_3-m),\quad b_{01}=(s_1+\eta_2)(m-2h_3),\\
  &b_{20}=\frac{s_1+\eta_2}{2h}(m-2h_3),\quad b_{11}=(s_1+\eta_2)\left(3-\frac{m}{h_3}\right),\\
  &b_{02}=(s_1+\eta_2)\left(\frac{m}{2h_3}-2\right).
\end{aligned}$$

Through the transformation:
$$\begin{aligned}
  u_2 &= u_1,\\
  v_2 &= a_{10}u_1 + a_{01}v_1,
\end{aligned}$$
the system becomes:
\begin{equation}\label{5.17}
  \begin{cases}
    \frac{\mathrm{d}u_2}{\mathrm{d}t}=c_{00}+v_2+c_{20}u_2^2+c_{11}u_2v_2+O(|(u_2,v_2)|^3),\\
    \frac{\mathrm{d}v_2}{\mathrm{d}t}=d_{00}+d_{10}u_2+d_{01}v_2+d_{20}u_2^2+d_{11}u_2v_2+d_{02}v_2^2+O(|(u_2,v_2)|^3).
  \end{cases}
\end{equation}

The coefficients are expressed as:
$$\begin{aligned}
  &c_{00}=a_{00},\quad c_{20}=a_{20}-\frac{a_{11}a_{10}}{a_{01}},\quad c_{11}=\frac{a_{11}}{a_{01}},\\
  &d_{00}=a_{00}a_{10},\quad d_{10}=a_{01}b_{10}-a_{10}b_{01},\quad d_{01}=a_{10}+b_{01},\\
  &d_{20}=a_{10}a_{20}+a_{01}b_{20}-a_{10}b_{11}-\frac{a_{10}^2a_{11}}{a_{01}}+\frac{a_{10}^2b_{02}}{a_{01}},\\
  &d_{11}=b_{11}+\frac{a_{10}a_{11}}{a_{01}}-\frac{2a_{10}b_{02}}{a_{01}},\quad d_{02}=\frac{b_{02}}{a_{01}}.
\end{aligned}$$
\noindent Perform the following transformation on system (\ref{5.17}):
$$
\begin{cases}
u_3=u_2, \\
v_3=c_{00}+v_2+c_{20}u_2^2+c_{11}u_2v_2+O(\mid(u_2,v_2)\mid^3),
\end{cases}
$$
resulting in the system:
\begin{equation}\label{5.18}
\begin{cases}
\frac{\mathrm{d}u_3}{\mathrm{d}t}=v_3, \\
\frac{\mathrm{d}v_3}{\mathrm{d}t}=e_{00}+e_{10}u_3+e_{01}v_3+e_{20}u_3^2+e_{11}u_3v_3+e_{02}v_3^2+O(\mid(u_3,v_3)\mid^3).
\end{cases}
\end{equation}

\noindent where:
$$
\begin{aligned}
&e_{00}=d_{00}-c_{00}d_{01}+c_{00}^2d_{02}, \\
&e_{10}=d_{10}+c_{11}d_{00}-c_{00}d_{11}+c_{00}^2c_{11}d_{02}, \\
&e_{01}=d_{01}-c_{00}c_{11}-2c_{00}d_{02}, \\
&e_{20}=d_{20}+c_{11}d_{10}-c_{20}d_{10}+2c_{00}c_{20}d_{02}, \\
&e_{11}=d_{11}+2c_{20}-c_{00}c_{11}^2-2c_{00}c_{11}d_{02}, \\
&e_{02}=d_{02}+c_{11}.
\end{aligned}
$$

\noindent Apply the time transformation $\mathrm{d}t=(1-e_{02}u_3)\mathrm{d}\tau$ to system (\ref{5.18}), yielding:
\begin{equation}\label{5.19}
\begin{cases}
\frac{\mathrm{d}u_3}{\mathrm{d}\tau}=v_3(1-e_{02}u_3), \\
\frac{\mathrm{d}v_3}{\mathrm{d}\tau}=(1-e_{02}u_3)\big(e_{00}+e_{10}u_3+e_{01}v_3+e_{20}u_3^2+e_{11}u_3v_3+e_{02}v_3^2+O(\mid(u_3,v_3)\mid^3)\big).
\end{cases}
\end{equation}
\noindent Apply the transformation $(u_4, v_4) = (u_3, v_3(1-e_{02}u_3))$ to system (\ref{5.19}), yielding:
\begin{equation}\label{5.20}
\begin{cases}
\frac{\mathrm{d}u_4}{\mathrm{d}\tau}=v_4, \\
\frac{\mathrm{d}v_4}{\mathrm{d}\tau}=f_{00}+f_{10}u_4+f_{01}v_4+f_{20}u_4^2+f_{11}u_4v_4+O(\mid(u_4,v_4)\mid^3),
\end{cases}
\end{equation}

\noindent where $f_{00}=e_{00},\quad f_{10}=e_{10}-2e_{00}e_{02},\quad f_{01}=e_{01},\quad f_{20}=e_{20}-2e_{02}e_{10}+e_{00}e_{02}^2,\quad f_{11}=-e_{01}e_{02}+e_{11}$.

\noindent Since $h_3 \neq \frac{1}{4}$, we have:
$$
\lim_{(\eta_1, \eta_2) \to (0,0)}f_{20} = \left(\frac{1}{2}-2h_3\right)(q+1) \neq 0.
$$

\noindent Assume $\lim_{(\eta_1, \eta_2) \to (0,0)}f_{20} < 0$, then for sufficiently small $\eta_1$ and $\eta_2$, $f_{20}(\eta) < 0$. When $\lim_{(\eta_1, \eta_2) \to (0,0)}f_{20} > 0$, a similar method can be applied. Perform the following transformation on system (\ref{5.20}):
$$
(u_5, v_5) = \left(u_4, \frac{v_4}{\sqrt{-f_{20}}}\right), \quad t_1 = \sqrt{-f_{20}}\tau,
$$
resulting in the system:
\begin{equation}\label{5.21}
\begin{cases}
\frac{\mathrm{d}u_5}{\mathrm{d}t_1}=v_5, \\
\frac{\mathrm{d}v_5}{\mathrm{d}t_1}=g_{00}+g_{10}u_5+g_{01}v_5-u_5^2+g_{11}u_5v_5+O(\mid(u_5,v_5)\mid^3),
\end{cases}
\end{equation}

\noindent where $g_{00} = -\frac{f_{00}}{f_{20}}, \quad g_{10} = -\frac{f_{10}}{f_{20}}, \quad g_{01} = \frac{f_{01}}{\sqrt{-f_{20}}}, \quad g_{11} = \frac{f_{11}}{\sqrt{-f_{20}}}$.

\noindent Perform the transformation $(u_6, v_6) = \left(u_5-\frac{g_{10}}{2}, v_5\right)$ on system (\ref{5.21}), yielding:
\begin{equation}\label{5.22}
\begin{cases}
\frac{\mathrm{d}u_6}{\mathrm{d}t_1}=v_6, \\
\frac{\mathrm{d}v_6}{\mathrm{d}t_1}=h_{00}+h_{01}v_6-u_6^2+h_{11}u_6v_6+O(\mid(u_6,v_6)\mid^3),
\end{cases}
\end{equation}

\noindent where $h_{00} = g_{00}+\frac{1}{4}g_{10}^2, \quad h_{01} = g_{01}+\frac{1}{2}g_{10}g_{11}, \quad h_{11} = g_{11}$.

\noindent Since $\lim_{(\eta_1, \eta_2) \to (0,0)}h_{11} = -(s_1+2+q) < 0$, for sufficiently small $\eta_1$ and $\eta_2$, $h_{11}(\eta) < 0$. Perform the transformation:
$$
(u_7, v_7) = \left(h_{11}^2u_6, -h_{11}^3v_6\right), \quad t_2 = -\frac{1}{h_{11}}t_1,
$$
resulting in the system:
\begin{equation}\label{5.23}
\begin{cases}
\frac{\mathrm{d}u_7}{\mathrm{d}t_2}=v_7, \\
\frac{\mathrm{d}v_7}{\mathrm{d}t_2}=l_{00}+l_{01}v_7+u_7^2+u_7v_7+O(\mid(u_7,v_7)\mid^3),
\end{cases}
\end{equation}

\noindent where $l_{00} = -h_{00}h_{11}^4, \quad l_{01} = -h_{01}h_{11}$.

\noindent Note:
$$
\left|\frac{\partial(l_{00}, l_{01})}{\partial(\eta_1, \eta_2)}\right|_{\eta_1=\eta_2=0} \neq 0.
$$

\noindent Therefore, for system (\ref{5.2}), by selecting $s$ and $h$ as bifurcation parameters, when parameters $s, h$ vary in a small neighborhood near $s_1, h_3$, the system exhibits a codimension-2 Bogdanov-Takens bifurcation near the equilibrium point $E_8$.
\end{proof}

\section{Conclusion}

The Holling I type Leslie-Gower model, where predators exhibit the Allee effect and implement constant capture on prey, has three types of equilibrium points under different parameters: $y=0$, $y=m$, and $y=x$. These equilibrium points may be stable or unstable nodes (foci), saddle points, weak centers, or cusp points under varying parameters. We selected three representative equilibrium points and analyzed bifurcation conditions near these equilibrium points as the parameters changed. By choosing the parameter $h$ as the bifurcation parameter, the system undergoes a saddle-node bifurcation at the equilibrium point $E_{1}$, with the critical bifurcation parameter being $h_2=\frac{1}{4}$. By choosing the parameter $s$ as the bifurcation parameter, when $s$ varies within a small neighborhood near $s_2$, the system undergoes a Hopf bifurcation near the equilibrium point $E_8$. The first Lyapunov coefficient \(\sigma<0\) corresponds to a supercritical Hopf bifurcation, while \(\sigma>0\) corresponds to a subcritical Hopf bifurcation. When both parameters $s$ and $h$ are selected as bifurcation parameters, and they vary within a small neighborhood around $s_1$ and $h_3$, the system undergoes a codimension-2 Bogdanov-Takens bifurcation near the equilibrium point $E_8$.
\newpage
\bibliographystyle{plainnat}
\bibliography{template}


\end{document}